\numberwithin{equation}{section}
\newtheorem{thm}{Theorem}[section]
\newtheorem{cor}[thm]{Corollary}
\newtheorem{prop}[thm]{Proposition}
\theoremstyle{definition}
\newtheorem{rem}[thm]{Remark}
\theoremstyle{remark}
\newcommand{\ds}{\displaystyle}
\newcommand{\R}{\mathbb{R}}
\newcommand{\N}{\mathbb{N}}
\newcommand{\cH}{\mathcal H}
\newcommand{\de}{\partial}
\newcommand{\eps}{\varepsilon}
\DeclareMathOperator{\dive}{div}
\DeclareMathOperator{\tr}{Tr}
\DeclareMathOperator{\Qp}{\mathcal Q_p}
\DeclareMathOperator{\arccosh}{arccosh}
\newcommand\restr[2]{{% we make the whole thing an ordinary symbol
  \left.\kern-\nulldelimiterspace % automatically resize the bar with \right
  #1 % the function
  \vphantom{ |} % pretend it's a little taller at normal size
  \right|_{#2} % this is the delimiter
  }}
{\left\{\begin{array}{@{}l@{}}}{\end{array}\right.}
\def\@makefnmark{} %note a di pagina senza numero 2
\begin{document}

\title{Sharp estimates for the first Robin eigenvalue of nonlinear elliptic operators 
}
\author{Francesco Della Pietra$^{*}$, Gianpaolo Piscitelli$^{*}$ 
\thanks{Dipartimento di Matematica e Applicazioni ``R. Caccioppoli'', Universit\`a degli studi di Napoli Federico II, Via Cintia, Monte S. Angelo - 80126 Napoli, Italia.  \newline 
Email: f.dellapietra@unina.it (\textit{corresponding author}), gianpaolo.piscitelli@unina.it}
}
%\date{}
\maketitle
\begin{abstract}
\noindent{\textsc{Abstract.}} The aim of this paper is to obtain optimal estimates for the first Robin eigenvalue of the anisotropic $p$-Laplace operator, namely:
\begin{equation*}
\lambda_1(\beta,\Omega)=%\lambda_{F}(p,\beta,\Omega)=
\min_{\psi\in W^{1,p}(\Omega)\setminus\{0\} } \frac{\displaystyle\int_\Omega F(\nabla \psi)^p dx +\beta \ds\int_{\de\Omega}|\psi|^pF(\nu_{\Omega}) d\cH^{N-1} }{\displaystyle\int_\Omega|\psi|^p dx},
\end{equation*}
where $p\in]1,+\infty[$, $\Omega$ is a bounded, mean convex domain in $\R^{N}$, $\nu_{\Omega}$ is its Euclidean outward normal, $\beta$ is a real number, and $F$ is a sufficiently smooth norm on $\R^{N}$. The estimates we found are in terms of the first eigenvalue of a one-dimensional nonlinear problem, which depends on $\beta$ and on geometrical quantities associated to $\Omega$. More precisely, we prove a lower bound of $\lambda_{1}$ in the case $\beta>0$, and a upper bound in the case $\beta<0$. As a consequence, we prove, for $\beta>0$, a lower bound for $\lambda_{1}(\beta,\Omega)$ in terms of the anisotropic inradius of $\Omega$ and, for $\beta<0$, an upper bound of $\lambda_{1}(\beta,\Omega)$ in terms of $\beta$.
\\[.5cm]
\noindent \textbf{MSC 2020:} 35J25 - 35P15 - 47J10 - 47J30. \\[.2cm]
\textbf{Key words and phrases:}  Nonlinear eigenvalue problems; Robin boundary conditions; Finsler norm; Optimal estimates.
\end{abstract}
%\newpage

\begin{center}
\begin{minipage}{13cm}
\small
\tableofcontents
\end{minipage}
\end{center}

\section{Introduction}
The aim of this paper is to obtain optimal estimates, in terms of a one-dimensional eigenvalue problem, for the first Robin eigenvalue of the anisotropic $p$-Laplacian operator, namely:
\begin{equation}
\label{eigint}
\lambda_1(\beta,\Omega)=
\min_{\psi\in W^{1,p}(\Omega)\setminus\{0\} } \frac{\displaystyle\int_\Omega F(\nabla \psi)^p dx +\beta \ds\int_{\de\Omega}|\psi|^pF(\nu_{\Omega}) d\cH^{N-1} }{\displaystyle\int_\Omega|\psi|^p dx},
\end{equation}
where $p\in]1,+\infty[$, $\Omega$ is a bounded, $C^{2}$ domain in $\R^{N}$, $\nu_{\Omega}$ is its Euclidean outward normal, $\beta$ is a real number, $F : \mathbb{R}^N \to [0, +\infty[$, $N\ge 2$, is a convex, even,  $1$-homogeneous and $C^{3,\alpha}(\mathbb{R}^N\setminus \{0\})$ function such that $[F^{p}]_{\xi\xi}$ is positive definite in $\R^{N}\setminus\{0\}$ (we refer the reader to Section \ref{notations_sec} for all the definitions, here and below used, related the Finsler metric $F$). When $\beta=0$, $\lambda_1(\beta,\Omega)=0$ is the first (trivial) Neumann eigenvalue, while, when $\beta$ goes to $+\infty$, it reduces to the first Dirichlet eigenvalue.

If $u\in W^{1,p}(\Omega)$ is a minimizer of \eqref{eigint}, then $u$ satisfies
\begin{equation}
    \label{pb_Robin1}
\begin{cases}
-\mathcal Q_p u =\lambda_{1} (\beta,\Omega)\left|u\right|^{p-2}u \quad &\text{in}\ \Omega,\\
F(\nabla u)^{p-1}F_{\xi}(\nabla u)\cdot \nu_{\Omega}+\beta F(\nu_{\Omega}) |u|^{p-2}u= 0 & \text{on}\ \partial\Omega,
\end{cases}
\end{equation}
where 
\[
\mathcal Q_p u=\dive(F^{p-1}(\nabla u)F_\xi(\nabla u)).
\]
We remark that when $F(\xi)=\sqrt{\sum_{i=1}^{N}\xi_{i}^{2}}$, the Euclidean norm, problem \eqref{pb_Robin1} corresponds to the classical eigenvalue problem for the Euclidean $p$-Laplace operator.

The problem of estimating \eqref{eigint} in terms of geometrical quantities related to $\Omega$ is well studied; actually, there is a striking difference between the cases $\beta \ge 0$ and $\beta <0$.

For positive values of the parameter $\beta$, if $\Omega$ is a bounded Lipschitz open set of $\R^{N}$, a Faber-Krahn inequality holds:
\begin{equation*}\label{FKintro}
\lambda_{1}(\beta,\Omega) \ge \lambda_{1}(\beta,\mathcal W)
\end{equation*}
where $\mathcal W$ is the so-called Wulff shape, with the same Lebesgue measure of $\Omega$. We refer the reader to \cite{pota} and the references therein contained.
Furthermore, in \cite{pota} it has been also proved that, in the class of convex sets,
an estimate in terms of the anisotropic inradius of $\Omega$ (that is the radius of the greatest Wulff shape that can be contained in $\Omega$), denoted by $R_F(\Omega)$, can be proved. Indeed, if $\Omega$ is a convex, bounded domain of $\R^{N}$, it holds that
\begin{equation*}%\label{farsharp}
\lambda_{1}(\beta,\Omega) \ge \left(\frac{p-1}{p}\right)^{p} \frac{\beta}{R_F(\Omega)\left(1+\beta^{\frac{1}{p-1}} R_{F}(\Omega)\right)^{p-1}}
\end{equation*}
(see also \cite{K} for $p=2$ in the Euclidean case). Actually, this estimate is far to be optimal. On the other hand, in the Dirichlet case (i.e. with $\beta=+\infty$) a sharp estimate holds for smooth bounded anisotropic mean convex domains, that is with nonnegative anisotropic mean curvature:
\begin{equation}
\label{herschanona}
\lambda_1^{D}(\Omega) \ge 
(p-1)
\left(\frac{\pi_{p}}{2}\right)^{p} \frac{1}{R_{F}(\Omega)^{p}}
\end{equation}
where $\lambda_{1}^{D}(\Omega)=\lim_{\beta\to+\infty}\lambda_{1}(\beta,\Omega)$ is the first Dirichlet eigenvalue of $\mathcal Q_{p}$, and
\[
\pi_{p}=2\int_{0}^{1}(1-t^{p})^{-\frac{1}{p}}dt=\frac{2\pi}{p\sin\frac\pi p}.
\]
Moreover, the estimate \eqref{herschanona} is optimal on a suitable infinite slab. Inequality \eqref{herschanona} is a generalization, for a  large class of nonlinear operators, of a well-known inequality for the first Dirichlet eigenvalue ($p=2$) of planar convex domains in the Euclidean Laplacian case, proved by Hersch  \cite{H1}, and then generalized by Protter \cite{Pr} in the case $N\ge 2$. The Finsler-Laplacian case has been considered in \cite{bgm} for $p=2$ and convex domains, while the general case with $1<p<+\infty$ and $\Omega$ anisotropic mean convex in \cite{DPdBG}, as well as the optimality of the inequality.

Various other kind of estimates, in terms of geometrical quantities depending on $\Omega$, can be given for the eigenvalues of nonlinear operators with Robin boundary conditions, Neumann conditions, as well as in ``holed'' domains with different conditions on the various components of the boundary. We refer the reader, for example, to \cite{DPGP,DPP,PPT} and the references therein contained.

For negative values of the parameter $\beta$, the situation is very different. For $p=2$ and in the Euclidean setting, in \cite{B}, Bareket conjectured that, among all smooth bounded domains of given volume, the first eigenvalue is maximized when $\Omega$ is a ball. The conjecture remained open for a long time. In \cite{FNT} it has been showed that it is true for domains close, in a suitable sense, to a ball. Moreover, the conjecture has been disproved in \cite{FK} for $p=2$, and in \cite{KP} for $p\in]1,+\infty[$, for $\left|\beta\right|$ large enough. Furthermore, in \cite{FK} it was proved that, in two dimensions and $p=2$, the conjecture is true for small values of $\left|\beta\right|$, and in \cite{KP}, in any dimension and $1<p<+\infty$ for starshaped domains, always for $\left|\beta\right|$ small. Actually, the optimal value of $\beta$ is still unknown. As regards the Finsler setting, in \cite{PT} it has been showed that for $\left|\beta\right|$ small the Wulff shape, among all the sufficiently smooth domains with given volume, is a maximizer. 

The aim of this paper is to obtain several estimates for $\lambda_{1}(\beta,\Omega)$ in terms of geometric quantities related to $\Omega$, both in the cases $\beta\ge 0$ and $\beta <0$.

For $\beta\ge 0$, we prove an optimal lower bound for $\lambda_{1}(\beta,\Omega)$, with $\Omega$ smooth, bounded domain, which is anisotropic mean convex (that is, with nonnegative anisotropic mean curvature $\mathcal H_F$, see Section 2.2 for the precise definition), in terms of the first eigenvalue of a nonlinear, one dimensional problem.
As a consequence, we obtain an estimate in terms of the anisotropic inradius of $\Omega$, which reduces to \eqref{herschanona} when $\beta$ tends to $+\infty$. The approach we adopt makes use of the $\mathcal P$-function method, well known in the classical Euclidean case for the Laplace operator (see \cite{Sp81} and the references therein) and recently considered in the Finsler case, for example, in \cite{CFV,DPGGLB,DPdBG}. 

More precisely, we prove the following result.
\begin{thm}
\label{main1}
Let $\Omega$ be a bounded $C^{2}$ domain in $ \R^{N}$, $ N \ge 2$, such that $\de\Omega$ has nonnegative anisotropic mean curvature $\mathcal H_{F}$, and let be $\beta>0$. Then
\begin{equation}
\label{sperbine}
\lambda_{1}(\beta,\Omega) \ge \mu_1(\beta,s_{0})
\end{equation}
where $\mu_{1}(\beta,s_{0})$ is the first positive root of
\begin{equation*}%\label{sperb_esti}
\frac{\mu}{p-1}=\frac{\beta^{p'}}{\cos_p^{-p}\left(\left(\frac{\mu}{p-1}\right)^\frac 1p s_{0} \right)-1}.
\end{equation*}
with $p'=\frac{p}{p-1}$, $s_{0}=(p'M(\Omega))^{\frac{1}{p'}}$, and $M(\Omega)$ is the maximum value of the stress function $w_\Omega$, that is the solution of 
 \begin{equation}
    \label{pbtord}
\begin{cases}
-\mathcal Q_p w_\Omega=1 \quad &\text{in}\ \Omega,\\
w_\Omega= 0 & \text{on}\ \partial\Omega.
\end{cases}
\end{equation} 
Moreover, the estimate \eqref{sperbine} is optimal for a suitable infinite slab.
\end{thm}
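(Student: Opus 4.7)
The plan is to combine the $\mathcal{P}$-function method --- applied to both the first eigenfunction and the torsion function --- with a one-dimensional comparison argument. Let $u>0$ be a first Robin eigenfunction normalized by $M_u := \max_{\bar\Omega} u$, attained in the interior since $\beta>0$. The first key ingredient is the pointwise bound
\[
F(\nabla u)^p \le \frac{\lambda_1(\beta,\Omega)}{p-1}\bigl(M_u^p - u^p\bigr) \qquad\text{in } \bar\Omega,
\]
obtained by considering the $\mathcal P$-function $\mathcal P_1 := F(\nabla u)^p + \frac{\lambda_1}{p-1}(u^p - M_u^p)$. A linearization of \eqref{pb_Robin1} shows $\mathcal P_1$ is a subsolution of a uniformly elliptic linear operator on $\{\nabla u\neq 0\}$; on $\partial\Omega$ the computation of the anisotropic co-normal derivative of $\mathcal P_1$ combines the Robin condition with $\mathcal H_F \ge 0$ to give the correct sign, ruling out strict boundary maxima via a Hopf-type argument. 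Hence $\mathcal P_1$ attains its maximum at an interior critical point of $u$, where it vanishes.

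The second ingredient is the analogous $\mathcal P$-function inequality for the torsion function of \eqref{pbtord}, known in the anisotropic setting under $\mathcal H_F \ge 0$ (see \cite{CFV,DPGGLB,DPdBG}): $F(\nabla w_\Omega)^{p'} \le p'(M(\Omega) - w_\Omega)$ in $\bar\Omega$. Setting
\[
s(x) := \bigl(s_0^{p'} - p'\, w_\Omega(x)\bigr)^{1/p'},\qquad s:\bar\Omega\to [0,s_0],
\]
this translates into $F(\nabla s)\le 1$ in $\bar\Omega$, with $s\equiv s_0$ on $\partial\Omega$ and $s=0$ at the maximum of $w_\Omega$; thus $s$ is a one-dimensional coordinate of unit anisotropic speed. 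The 1D eigenfunction for $\mu_1(\beta,s_0)$ has the form $v(r) = M_u\cos_p(\tilde k\,r)$ with $\tilde k = (\mu_1/(p-1))^{1/p}$, so that $|v'|^p = \tilde k^p(M_u^p - v^p)$ and the Robin condition at $r = s_0$ is precisely the characteristic equation in the statement. Combining the pointwise bound on $F(\nabla u)$ with $F(\nabla s)\le 1$ along integral curves of $\nabla s$, and using that the Robin BC for $u$ --- via the duality identity for the Finsler norm --- gives $F(\nabla u)^{p-1}\ge \beta u^{p-1}$ on $\partial\Omega$, I expect to arrive at
\[
\beta^{p'} \;\le\; \frac{\lambda_1}{p-1}\Bigl(\cos_p^{-p}\bigl((\lambda_1/(p-1))^{1/p}\, s_0\bigr) - 1\Bigr),
\]
which, by monotonicity in $\lambda_1$ of the right-hand side, is equivalent to $\lambda_1(\beta,\Omega) \ge \mu_1(\beta, s_0)$.

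The hardest step will be the last comparison: passing from the two Lipschitz-type bounds --- one for $u$ with vanishing locus $\{u = M_u\}$ and one for $s$ with vanishing locus $\{w_\Omega = M(\Omega)\}$, which a priori lie at different interior points --- to the pointwise inequality $u(x) \ge v(s(x))$, or directly to the algebraic inequality above. The mechanism I plan to use is a direct ODE integration along integral curves of $\nabla s$, exploiting that the $\mathcal P$-inequality yields $F(\nabla u) \le k_1(M_u^p - u^p)^{1/p}$ with $k_1 = (\lambda_1/(p-1))^{1/p}$ while $F(\nabla s)\le 1$; the chain of inequalities is then closed at $\partial\Omega$ by the Robin condition for $u$. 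Optimality on a suitable infinite slab follows by direct verification: on such a slab $\mathcal H_F \equiv 0$, the torsion function is one-dimensional, $s$ coincides with the transverse coordinate, and every inequality in the above argument becomes an equality.
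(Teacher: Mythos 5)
Your route is genuinely different from the paper's and, as written, has two real gaps.

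\textbf{Gap 1: the $\mathcal P$-function bound for the Robin eigenfunction.} Your first key ingredient is $F(\nabla u)^p \le \frac{\lambda_1}{p-1}(M_u^p-u^p)$ for the first Robin eigenfunction $u$. The paper never proves anything like this, and for good reason: the available $\mathcal P$-function result (Theorem~\ref{prsperb}) is stated and proved for the \emph{Dirichlet} problem \eqref{pb_gen}, where the boundary analysis is clean because $w\equiv 0$ on $\de\Omega$. In the Robin case $u\not\equiv 0$ on $\de\Omega$ and the boundary co-normal derivative of $\mathcal P_1$ produces extra terms involving $u$, $\nabla u$ tangential derivatives, the Robin coupling $\beta$, and $\mathcal H_F$ simultaneously; ``combines the Robin condition with $\mathcal H_F\ge 0$ to give the correct sign'' is the conclusion you would need, not an argument. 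Establishing such a Robin $\mathcal P$-bound for $\Qp$ would itself be a nontrivial new result (even in the Euclidean, $p=2$ case of Sperb it is delicate), and nothing of the sort is cited or proved here.

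\textbf{Gap 2: the closing comparison does not go through as sketched, and the estimate degrades.} You acknowledge this is the hardest step, but the obstruction is structural, not merely technical. If you integrate the bound $F(\nabla u)\le k_1 (M_u^p-u^p)^{1/p}$ along integral curves of $\nabla u$ from the interior maximum of $u$ to $\de\Omega$, you control $\arccos_p(u/M_u)$ by $k_1$ times the \emph{anisotropic length} of that path, which you can only bound by $R_F(\Omega)$, not by $s_0=(p'M(\Omega))^{1/p'}\le R_F(\Omega)$. That yields only the weaker bound $u|_{\de\Omega}\ge M_u\cos_p(k_1 R_F(\Omega))$, hence a conclusion with $R_F$ in place of $s_0$ --- strictly worse than \eqref{sperbine}. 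If instead you try to integrate along integral curves of $\nabla s$ (which is what you need to bring $s_0$ into play), you must control $\nabla u\cdot \nabla s$, and the two available bounds are $F(\nabla u)\le\cdots$ and $F(\nabla s)\le 1$; pairing them via \eqref{prodscal} requires $F^o$ of one of the two gradients, which you do not control. The fact that the maximum points of $u$ and $w_\Omega$ are in general different is a symptom of the same problem, not a side issue.

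\textbf{What the paper does instead.} The paper uses the $\mathcal P$-function only for the torsion function $w_\Omega$ (Remark~\ref{rem_P_f}, a Dirichlet problem), sets $s(x)=[p'(M(\Omega)-w_\Omega(x))]^{1/p'}$, and builds the barrier $v(x):=X(s(x))$, where $X$ is the 1D eigenfunction of \eqref{1dim} on $(0,s_0)$. A direct computation of $\Qp v$, using $F(\nabla s)\le 1$ (i.e., the torsional $\mathcal P$-bound) and the monotonicity of $g(s)=|X'|^{p-2}X'+\mu_1 sX^{p-1}$, shows that $v$ is a \emph{weak supersolution} of the Robin eigenvalue problem at level $\mu_1(\beta,s_0)$, including the boundary inequality. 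Then the Picone-type comparison in Proposition~\ref{usefullemma} yields $\lambda_1(\beta,\Omega)\ge\mu_1(\beta,s_0)$ by an energy argument, with no need to track gradient flow lines or to compare maxima, and no need for any $\mathcal P$-bound on $u$ itself. If you want to salvage your approach, the crucial missing piece is a global comparison device replacing the ODE integration --- and that device is precisely Proposition~\ref{usefullemma} applied to the barrier $X\circ s$, not to the eigenfunction $u$.
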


The definition of the generalized trigonometric function $\cos_{p}$ will be recalled in Section \ref{1D_subsec}.

Inequality \eqref{sperbine} was first proved, when $F$ is the Euclidean norm and $p=2$, by Sperb in \cite{Sp92}.
The value $\mu_{1}(\beta,s_{0})$ is the first eigenvalue of a suitable nonlinear one dimensional eigenvalue problem, namely
\begin{equation}
\label{1dimintro}
    \begin{cases}
    (|X'|^{p-2}X')'+\mu_{1}(\beta,s_{0}) |X|^{p-2}X=0 \quad\text{in}\ (0,s_0),\\
    |X'(0)|^{p-2}X'(0)=0, \\
    |X'(s_{0})|^{p-2}X'(s_{0})+\beta |X(s_{0})|^{p-2}X(s_{0})=0, 
    \end{cases}
\end{equation}
where $s_{0}= (p'M(\Omega))^{\frac{1}{p'}}$ (this problem is studied in Section 3). The inequality \eqref{sperbine} was not known also in the Euclidean case, when $p\ne 2$; we mention that optimal estimates of $\lambda_{1}(\beta,\Omega)$ with respect to the first eigenvalue of a one dimensional problem have been obtained by \cite{S} for the case of the Euclidean Laplacian eigenvalue, and \cite{LW} for the $p$-Laplacian, both on compact manifolds. In these cases, when $\Omega$ is a domain of $\R^N$, our estimate \eqref{sperbine} improves the quoted results in \cite{LW,S}. Moreover, the class of the operators we consider is very general and can be highly nonlinear.

Furthermore, Theorem \ref{main1} allows to achieve an estimate in terms of the inradius $R_F(\Omega)$, by using the following bounds for the maximum $M(\Omega)$ of the Saint-Venant torsional rigidity problem and the anisotropic inradius $R_{F}(\Omega)$.
\begin{thm}
\label{paynemax}
Let $\Omega$ be a bounded $C^{2}$ domain in $ \R^{N}$, $ N \ge 2$, such that $\de\Omega$ has nonnegative anisotropic mean curvature $\mathcal H_{F}$. Then
\[
\frac{1}{N^{p'-1}}\frac{R_{F}^{p'}(\Omega)}{p'}\leq M(\Omega)\leq\frac{R_F^{p'}(\Omega)}{p'}.
\]
 Moreover, the right-hand side inequality is optimal, as $\Omega$ approaches a suitable infinite $N$-dimensional slab; the left-hand side inequality holds as an equality if and only if $\Omega$ is a Wulff shape.
\end{thm}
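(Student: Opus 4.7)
I would split the argument into the lower bound, the upper bound, and the optimality/equality analysis.

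\emph{Lower bound.} By definition of the anisotropic inradius there is a point $x^{*}\in\Omega$ with $\W_{R_{F}(\Omega)}(x^{*})\subseteq\Omega$. On such a Wulff shape the torsion problem is radial with respect to $F^{o}$: solving the ODE $-\frac{1}{r^{N-1}}\bigl(r^{N-1}|\psi'|^{p-2}\psi'\bigr)'=1$ on $(0,R)$ with $\psi(R)=0$ and $\psi'(0)=0$ yields the explicit solution
\[
w_{\W_{R}}(x)=\frac{R^{p'}-F^{o}(x-x^{*})^{p'}}{N^{p'-1}p'},\qquad M(\W_{R})=\frac{R^{p'}}{N^{p'-1}p'}.
\]
Both $w_{\Omega}$ and the zero-extension of $w_{\W_{R_{F}}(x^{*})}$ solve $-\mathcal Q_{p}u=1$ in $\W_{R_{F}}(x^{*})$, and on $\partial\W_{R_{F}}(x^{*})\subseteq\overline\Omega$ one has $w_{\Omega}\ge 0=w_{\W_{R_{F}}(x^{*})}$. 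Weak comparison for $\mathcal Q_{p}$ then gives $w_{\Omega}\ge w_{\W_{R_{F}}(x^{*})}$ in $\W_{R_{F}}(x^{*})$, and taking the maximum proves the lower bound. If equality holds, the touching value $w_{\Omega}(x^{*})=w_{\W_{R_{F}}(x^{*})}(x^{*})$ together with strong comparison forces $w_{\Omega}\equiv w_{\W_{R_{F}}(x^{*})}$ in $\W_{R_{F}}(x^{*})$, hence $w_{\Omega}\equiv 0$ on $\partial\W_{R_{F}}(x^{*})$; since $w_{\Omega}>0$ in $\Omega$ by the strong maximum principle, this forces $\partial\W_{R_{F}}(x^{*})\subseteq\partial\Omega$, and a connectedness argument then yields $\Omega=\W_{R_{F}}(x^{*})$.

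\emph{Upper bound.} This is where the mean-convexity hypothesis is essential. I would use the $\mathcal P$-function
\[
P=\frac{p-1}{p}F(\nabla w_{\Omega})^{p}+w_{\Omega}.
\]
A Finsler--Bochner-type identity (in the spirit of the anisotropic $\mathcal P$-function method referenced above) shows that $P$ is a subsolution of a degenerate linear elliptic operator on $\{\nabla w_{\Omega}\ne 0\}$, while a boundary computation yields $\partial P/\partial\nu_{\Omega}\le 0$ on $\partial\Omega$ whenever $\mathcal H_{F}\ge 0$. A boundary-point argument then shows that the maximum of $P$ is attained at an interior critical point of $w_{\Omega}$, where $P=w_{\Omega}=M(\Omega)$, so one obtains the pointwise gradient estimate
\[
F(\nabla w_{\Omega})^{p}\le p'\bigl(M(\Omega)-w_{\Omega}\bigr)\quad\text{in }\Omega.
\]
Setting $G:=p'\bigl(M(\Omega)-w_{\Omega}\bigr)^{1/p'}$, a direct calculation gives $F(\nabla G)\le (p')^{1/p}$, so $G$ is $(p')^{1/p}$-Lipschitz with respect to the distance induced by $F^{o}$. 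Picking $x_{0}\in\Omega$ maximising $w_{\Omega}$ and a nearest point $y_{0}\in\partial\Omega$ in $F^{o}$-distance, minimality keeps the segment $[x_{0},y_{0}]$ inside $\overline\Omega$, and integrating along it yields
\[
p'\,M(\Omega)^{1/p'}=G(y_{0})-G(x_{0})\le (p')^{1/p}F^{o}(x_{0}-y_{0})\le (p')^{1/p}R_{F}(\Omega),
\]
which rearranges to $M(\Omega)\le R_{F}(\Omega)^{p'}/p'$.

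\emph{Optimality on slabs and main obstacle.} The upper bound is asymptotically sharp on an infinite slab $\{x\in\R^{N}:0<x\cdot\nu<2RF(\nu)\}$: on such a slab the torsion function reduces by translation invariance to the one-dimensional profile $\psi(t)=(R^{p'}-t^{p'})/p'$ on $(-R,R)$, which attains exactly $R^{p'}/p'$; the statement is made rigorous by a sequence of long bounded rectangular approximations. The main technical obstacle I anticipate is the $\mathcal P$-function step for the upper bound: one must set up a Finsler--Bochner identity strong enough to give the subsolution property of $P$ despite the degeneracy of $\mathcal Q_{p}$ at critical points of $w_{\Omega}$, and carry out the boundary computation that turns $\mathcal H_{F}\ge 0$ into the correct sign of $\partial P/\partial\nu_{\Omega}$. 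Once that maximum principle is in place, the pointwise gradient bound and the Lipschitz integration along the shortest anisotropic segment to $\partial\Omega$ close the argument cleanly.
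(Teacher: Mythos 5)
The paper does not prove Theorem~\ref{paynemax}: immediately after stating it, it attributes both inequalities in the $p$-Finsler case to \cite{DPGGLB} (and the classical planar $p=2$ upper bound to \cite{Pa68}). So there is no in-paper proof to compare against; I can only judge your plan on its own merits and against the tools the paper uses elsewhere.

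Your plan is correct and in fact coincides with the standard route. For the lower bound, comparing $w_\Omega$ with the radial torsion function of the largest inscribed Wulff shape is right; your explicit solution
$w_{\W_R}(x)=\bigl(R^{p'}-F^o(x-x^*)^{p'}\bigr)/(N^{p'-1}p')$
is correct, and the weak comparison principle for the strictly monotone operator $\Qp$ gives $w_\Omega\ge w_{\W_R(x^*)}$ on $\W_R(x^*)$, hence $M(\Omega)\ge M(\W_{R_F})$. (The ``zero extension'' phrasing is unnecessary -- you simply compare on $\W_R(x^*)$.) In the equality case your argument is sound, but note that you are invoking a strong comparison principle for $\Qp$ at $x^*$, a point where both gradients vanish; this degeneracy is exactly where such principles are delicate for $p$-Laplace-type operators, so one must either cite a suitable version or argue around the critical point -- worth flagging as a genuine technical point rather than a formality. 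For the upper bound, the gradient estimate $F(\nabla w_\Omega)^p\le p'(M(\Omega)-w_\Omega)$ that you aim to derive via a Finsler--Bochner identity is precisely what the paper already records in Remark~\ref{rem_P_f} as a consequence of Theorem~\ref{prsperb} (the anisotropic $\mathcal P$-function estimate with $f\equiv1$), and this is exactly where $\mathcal H_F\ge 0$ enters; in the context of this paper you would simply invoke that result rather than re-prove it. Your Lipschitz step for $G=p'(M(\Omega)-w_\Omega)^{1/p'}$, integrating along the straight segment from a maximum point of $w_\Omega$ to its nearest boundary point in $F^o$-distance (using $|\xi\cdot\eta|\le F(\xi)F^o(\eta)$ and evenness of $F^o$, and the fact that minimality keeps the segment in $\overline\Omega$), is correct and closes the upper bound. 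The slab optimality sketch, via translation-invariant one-dimensional profiles and approximation by long bounded boxes, is also the right idea and matches the optimality discussion for the slab in Section~\ref{main_sec}.
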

 The upper bound, in the case of planar Euclidean Laplacian, goes back to a result proved in \cite{Pa68}; the two inequalities in the general $p$-Finsler case were proved in \cite{DPGGLB}.

Then as a corollary of Theorem \ref{main1}, we obtain the following explicit estimate of $\lambda_{1}(\beta,\Omega)$ in terms of $R_{F}(\Omega)$.
\begin{cor}\label{cor_H}
Under the same hypotheses of Theorem \ref{main1}, it holds that 
\begin{equation}
\label{corest}
\lambda_1(\beta,\Omega)\ge 
(p-1)\left( \frac{\pi_p}{2}\right)^p\frac{1}{\left(R_F(\Omega)+\frac{\pi_p}{2}\beta^{-\frac1{p-1}}\right)^p}
,
\end{equation}
where $R_{F}(\Omega)$ is the anisotropic inradius of $\Omega$.
\end{cor}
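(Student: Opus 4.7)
The plan is to chain Theorem~\ref{main1} with Theorem~\ref{paynemax}, and then to extract an explicit estimate from the implicit equation defining $\mu_1(\beta,s_0)$. Theorem~\ref{paynemax} yields $s_0=(p'M(\Omega))^{1/p'}\le R_F(\Omega)$, whence Theorem~\ref{main1} gives $\lambda_1(\beta,\Omega)\ge \mu_1(\beta,s_0)$.

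Setting $a:=(\mu_1(\beta,s_0)/(p-1))^{1/p}$ and using the generalized Pythagorean identity $\sin_p^p+\cos_p^p=1$, the implicit relation of Theorem~\ref{main1} rewrites compactly as $a\,\tan_p(as_0)=\beta^{1/(p-1)}$, with $\tan_p=\sin_p/\cos_p$. Geometrically, the corresponding 1D eigenfunction $X(s)=\cos_p(as)$ on $(0,s_0)$ extends to a Neumann--Dirichlet eigenfunction on the enlarged interval $(0,s_\ast)$ with $s_\ast:=\pi_p/(2a)$, so that $\mu_1(\beta,s_0)=(p-1)(\pi_p/(2s_\ast))^p$.

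Hence the claim is equivalent to the geometric estimate $s_\ast\le R_F(\Omega)+(\pi_p/2)\beta^{-1/(p-1)}$. Since $s_0\le R_F(\Omega)$, it suffices to show $s_\ast-s_0\le(\pi_p/2)\beta^{-1/(p-1)}$; using $s_\ast-s_0=(\pi_p/2-as_0)/a$ together with $\beta^{-1/(p-1)}=1/(a\tan_p(as_0))$, the problem boils down to the one-variable inequality
\[
\Bigl(\tfrac{\pi_p}{2}-\theta\Bigr)\tan_p(\theta)\;\le\;\tfrac{\pi_p}{2},\qquad \theta:=as_0\in\Bigl(0,\tfrac{\pi_p}{2}\Bigr).
\]

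The main obstacle is establishing this 1D estimate. I would tackle it by analyzing $g(\theta)=(\pi_p/2-\theta)\tan_p(\theta)$ on $(0,\pi_p/2)$, exploiting the derivative identity $\tan_p'(\theta)=1+\tan_p^p(\theta)$ (inherited from $\sin_p'=\cos_p$ and $\sin_p^p+\cos_p^p=1$), the boundary value $g(0)=0$, and the asymptotics as $\theta\to(\pi_p/2)^-$; the latter corresponds to the Dirichlet limit $\beta\to+\infty$ and saturates the sharp Hersch--Protter estimate~\eqref{herschanona}.
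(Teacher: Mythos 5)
Your reduction is correct and runs parallel to the paper's own argument: $\lambda_1\ge\mu_1(\beta,s_0)$ by Theorem~\ref{main1}, $s_0=(p'M(\Omega))^{1/p'}\le R_F(\Omega)$ by Theorem~\ref{paynemax}, and the implicit relation rewritten as $a\tan_p(as_0)=\beta^{1/(p-1)}$ (obtained from $\cos_p^{-p}\theta-1=\tan_p^p\theta$) indeed turns the claim into the single-variable bound $(\tfrac{\pi_p}{2}-\theta)\tan_p(\theta)\le\tfrac{\pi_p}{2}$ for $\theta=as_0\in(0,\pi_p/2)$. But there are two problems with what remains.

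First, you leave precisely this one-variable inequality unproven and call it ``the main obstacle,'' so the argument is not complete as written. The paper closes the gap at this exact point by invoking the Bhayo--Vuorinen inequality (\cite{BV}) $\tfrac{\pi_p}{2}-\arccos_p(x)<\tfrac{\pi_p}{2}\,x$ for $x\in(0,1)$; with $x=\cos_p\theta$ this reads $\tfrac{\pi_p}{2}-\theta<\tfrac{\pi_p}{2}\cos_p\theta$, and multiplying by $\tan_p\theta$ and using $\sin_p\theta\le 1$ gives exactly your target inequality. (The paper actually writes this as $\pi_p/2-s_0\tilde\mu^{1/p}\le(\pi_p/2)\tilde\mu^{1/p}\beta^{-1/(p-1)}$, which is the same estimate after the substitution $\cos_p\theta\le 1/\tan_p\theta=\tilde\mu^{1/p}\beta^{-1/(p-1)}$.) If you cite \cite{BV} at this step, your proof is complete.

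Second, the strategy you outline for proving the 1D estimate rests on the identity $\tan_p'=1+\tan_p^p$, which you say follows from $\sin_p'=\cos_p$ and $\sin_p^p+\cos_p^p=1$. That is incompatible with the paper's convention. Here $\cos_p$ is defined as the inverse of $\arccos_p(x)=\int_x^1(1-t^p)^{-1/p}\,dt$, so $\cos_p'=-\bigl(1-\cos_p^p\bigr)^{1/p}=-\sin_p$; differentiating $\sin_p^p+\cos_p^p=1$ then forces $\sin_p'=\cos_p^{p-1}\sin_p^{2-p}$, which equals $\cos_p$ only when $p=2$. (More generally, the relations $\sin_p'=\cos_p$, $\cos_p'=-\sin_p$ and $\sin_p^p+\cos_p^p=1$ cannot hold simultaneously for $p\neq 2$.) The correct identity in this convention is $\tan_p'=\tan_p^{2-p}+\tan_p^2$, so the derivative analysis you sketch would not go through as stated; in particular $\tan_p'\to+\infty$ as $\theta\to 0^+$ when $p>2$, so the sign analysis of $g'(\theta)=-\tan_p\theta+(\tfrac{\pi_p}{2}-\theta)\tan_p'\theta$ near the left endpoint needs care. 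Given this, the clean fix is to use the concavity of $\arccos_p$ (equivalently the \cite{BV} bound) rather than to attempt a direct derivative argument for $g$.
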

Incidentally, the inequality \eqref{corest} reduces to \eqref{herschanona} as $\beta \to+\infty$, since $\lambda_{1}(\beta,\Omega)$ converges to the first Dirichlet eigenvalue.

An estimate of the type \eqref{corest} has been given also in \cite{S} in the Euclidean, $p=2$ case, when $\Omega$ is a compact Riemannian manifold with smooth boundary and nonnegative Ricci curvature of $\Omega$ and mean curvature of $\de\Omega$.

In the case $\beta<0$, we get the following upper bound.

\begin{thm}\label{main2}
Let $\Omega$ be a bounded $C^{2}$ domain in $ \R^{N}$, $ N \ge 2$, with nonnegative anisotropic mean curvature $\mathcal H_{F}$ of $\de\Omega$, and let $\beta<0$. 
Then
\begin{equation}\label{sperbineN}
\lambda_{1}(\beta,\Omega) \le \mu_1(\beta,R_{F}(\Omega))
\end{equation}
where $\mu_{1}(\beta,R_{F}(\Omega))$ is the unique negative value that solves
 \begin{equation*}%\label{sperb_estiN}
-\frac{\mu}{{p-1}} =\frac{|\beta|^{p'}}{1-\cosh_p^{-p}\left(\left(\frac{-\mu}{p-1}\right)^\frac 1p R_{F}(\Omega) \right)}.
\end{equation*}
Moreover, the estimate \eqref{sperbineN} is optimal for a suitable slab.
\end{thm}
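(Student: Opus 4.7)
Since \eqref{sperbineN} is an upper bound on a minimum, the natural strategy is a competitor argument based on the anisotropic distance to the boundary. Let $d_F(x)=\dist_F(x,\de\Omega)$, so that $F(\nabla d_F)=1$ a.e.\ on $\Omega$ and $\max_\Omega d_F=R_F(\Omega)$. Set $\omega:=\lto-\mu_1/(p-1)\rto^{1/p}$, with $\mu_1=\mu_1(\beta,R_F(\Omega))<0$, and take as competitor
\[
\psi(x):=X(d_F(x)),\qquad X(s):=\cosh_p\bigl(\omega(R_F(\Omega)-s)\bigr),\qquad s\in[0,R_F(\Omega)].
\]
By construction, $X>0$, $X'\le 0$, $X'(R_F(\Omega))=0$, and $X$ solves $(|X'|^{p-2}X')'+\mu_1|X|^{p-2}X=0$; moreover the transcendental equation characterizing $\mu_1$ is, via the first integral of this ODE, exactly equivalent to the Robin-type boundary relation
\[
|X'(0)|^{p-2}X'(0)=\beta\,|X(0)|^{p-2}X(0).
\]

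Using $F(\nabla\psi)=|X'(d_F)|$ a.e.\ (recall that $F$ is even and $F(\nabla d_F)=1$), together with the anisotropic coarea identity
\[
\int_\Omega g(d_F(x))\,dx=\int_0^{R_F(\Omega)} g(s)\,P(s)\,ds,\qquad P(s):=P_F(\{d_F>s\}),
\]
the Rayleigh quotient of $\psi$ reduces to a purely one-dimensional expression with weight $P$. An integration by parts of its numerator, in which the ODE turns $-(|X'|^{p-2}X')'$ into $\mu_1|X|^{p-2}X$ and the two boundary relations kill the $s=R_F(\Omega)$ term and cancel the boundary integral $\beta|X(0)|^p P_F(\Omega)$, yields
\[
\mathcal R(\psi)=\mu_1-\frac{\ds\int_0^{R_F(\Omega)} X(s)\,|X'(s)|^{p-2}X'(s)\,P'(s)\,ds}{\ds\int_0^{R_F(\Omega)} |X|^p P\,ds}.
\]

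The argument is concluded by a sign check on the correction. Since $X>0$ and $X'\le 0$ one has $X|X'|^{p-2}X'\le 0$; and the hypothesis $\mathcal H_F\ge 0$ on $\de\Omega$ implies, via the first variation of the anisotropic perimeter applied to the inner parallel sets $\{d_F>s\}$, that $s\mapsto P(s)$ is nonincreasing, so $P'\le 0$ a.e. The integrand is therefore $\ge 0$, the subtracted fraction is $\ge 0$, and
\[
\lambda_1(\beta,\Omega)\le\mathcal R(\psi)\le\mu_1(\beta,R_F(\Omega)).
\]
Sharpness follows by letting $\Omega$ approach (a large truncation of) a Finsler slab of half-width $R_F(\Omega)$, where $P(s)$ is essentially constant and the correction vanishes, so that $\mathcal R(\psi)=\mu_1$ in the limit.

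The main obstacle is the monotonicity $P'(s)\le 0$: $d_F$ is only Lipschitz, and the level sets $\{d_F=s\}$ may touch the anisotropic cut locus, so the first variation formula for $P_F$ has to be applied on the smooth (full-$\mathcal H^{N-1}$-measure) part of $\{d_F=s\}$, using that the anisotropic mean curvature of the inner parallels inherits nonnegativity from that of $\de\Omega$ along the Finsler inward flow; alternatively, one approximates $\Omega$ by smooth strictly anisotropic-mean-convex domains for which the monotonicity is classical and passes to the limit. Once $P'\le 0$ is established, the rest is an integration by parts.
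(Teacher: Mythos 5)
Your proof is correct and reaches the same conclusion, but it is organized differently from the paper's. You take the competitor $\psi = X(d_F)$, reduce its Rayleigh quotient to a one-dimensional weighted expression via the anisotropic coarea formula with weight $P(s) = P_F(\{d_F > s\})$, integrate by parts using the ODE and the boundary relation at $s=0$ (which cancels the Robin term $\beta X(0)^p P_F(\Omega)$), and close with the sign of $X|X'|^{p-2}X'\cdot P'$. The paper instead proves that $v=X(d_F)$ satisfies, in the weak sense, the differential inequality $-\mathcal Q_p v \le \mu_1 v^{p-1}$ with the Robin-type boundary inequality, by integrating in anisotropic normal coordinates (Theorem \ref{change_normal}) and invoking \eqref{laplaciano_cambio}; it then applies the Picone-type comparison result, Proposition \ref{usefullemma}, to conclude. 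Both arguments ride on exactly the same geometric fact — that the Jacobian $J(y,t)$ is nonincreasing in $t$ under anisotropic mean convexity, which is equivalent to your monotonicity $P'\le 0$ (including a possibly singular nonpositive part of $dP$ at the cut locus, which only helps). Your route is more elementary for this upper bound, since plugging a competitor into the infimum needs no comparison principle; the paper's route is more modular, reusing the same lemma that is genuinely needed for the lower bound when $\beta>0$. One small caution: $P$ is only a nonincreasing BV function, so the integration by parts in your last display should be read in the Stieltjes sense against the nonpositive measure $dP$, rather than assuming $P'$ exists pointwise — this is precisely what the paper's normal-coordinate representation of $P(s)=\int_{\{\ell(y)>s\}}F(\nu_\Omega)J(y,s)\,d\mathcal H^{N-1}$ handles cleanly, and it is the rigorous substitute for the first-variation heuristic you sketch.
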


The value $\mu_{1}(\beta,R_{F}(\Omega))$ is the first eigenvalue of \eqref{1dimintro} when $\beta<0$ and $s_{0}=R_{F}(\Omega)$.

Finally, by using Theorem \ref{main2} we get the following estimate for $\lambda_{1}$ in terms of $\beta$.

\begin{cor}\label{corestN}
Under the hypotheses of Theorem \ref{main2}, it holds that
\begin{equation}
\label{corestNine}
 \lambda_{1}(\beta,\Omega)\le (1-p)|\beta|^{\frac{p}{p-1}}.
\end{equation}
\end{cor}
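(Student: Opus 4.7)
The plan is to reduce the estimate to an elementary analysis of the implicit equation that defines the one-dimensional eigenvalue $\mu_1(\beta,R_F(\Omega))$. First I would apply Theorem \ref{main2} directly: since $\Omega$ satisfies its hypotheses, we already know
\[
\lambda_1(\beta,\Omega)\le \mu_1(\beta,R_F(\Omega)),
\]
so it suffices to prove the purely one-dimensional inequality
\[
\mu_1(\beta,R_F(\Omega))\le (1-p)|\beta|^{p/(p-1)}=-(p-1)|\beta|^{p'}.
\]

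Next I would exploit the characterization recalled in Theorem \ref{main2}: $\mu=\mu_1(\beta,R_F(\Omega))<0$ is the unique negative solution of
\[
-\frac{\mu}{p-1}=\frac{|\beta|^{p'}}{1-\cosh_p^{-p}\!\left(\!\left(\tfrac{-\mu}{p-1}\right)^{\!1/p} R_F(\Omega)\right)}.
\]
Setting $\tau:=\bigl(\tfrac{-\mu}{p-1}\bigr)^{1/p}R_F(\Omega)>0$, the key structural fact I would invoke is the elementary property of the generalized hyperbolic cosine, namely that $\cosh_p$ is strictly increasing on $[0,+\infty)$ with $\cosh_p(0)=1$ (this is immediate from its definition as the inverse of $\int_0^{\cdot}(1+t^p)^{-1/p}\,dt$, which will be recorded in Section \ref{1D_subsec}). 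Hence $\cosh_p(\tau)>1$, so $0<\cosh_p^{-p}(\tau)<1$, and therefore $0<1-\cosh_p^{-p}(\tau)<1$. This gives immediately
\[
-\frac{\mu_1(\beta,R_F(\Omega))}{p-1}=\frac{|\beta|^{p'}}{1-\cosh_p^{-p}(\tau)}>|\beta|^{p'},
\]
and rearranging yields $\mu_1(\beta,R_F(\Omega))<-(p-1)|\beta|^{p'}$, as desired.

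Chaining the two inequalities produces \eqref{corestNine}. There is essentially no serious obstacle here: the geometric work has already been absorbed into Theorem \ref{main2}, and the only analytic ingredient needed is the trivial monotonicity of $\cosh_p$ at the origin. The only point deserving a word of care is making explicit that $\tau$ is strictly positive (which uses $R_F(\Omega)>0$ and $\mu<0$), ensuring that the bound $\cosh_p(\tau)>1$ is strict and thus the final inequality is well-defined; in particular, the estimate \eqref{corestNine} is actually strict for every admissible $\Omega$, with equality only in a degenerate slab-like limit consistent with the optimality statement of Theorem \ref{main2}.
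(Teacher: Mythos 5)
Your proof is correct and follows essentially the same route as the paper, which cites Theorem \ref{main2} together with Remark \ref{argument}: the latter obtains $\mu_1(\beta,s_0)\le-(p-1)|\beta|^{p'}$ from \eqref{tildemuneg} by exactly the observation you spell out, that $\cosh_p(\tau)>1$ for $\tau>0$ forces the denominator $1-\cosh_p^{-p}(\tau)$ into $(0,1)$. One small inaccuracy worth fixing: $\cosh_p$ is the inverse of $\arccosh_p(x)=\int_1^x (t^p-1)^{-1/p}\,dt$ (not of $\int_0^{\cdot}(1+t^p)^{-1/p}\,dt$, which would be an $\arcsinh_p$-type integral), though the properties you actually use, namely strict monotonicity on $[0,+\infty)$ and $\cosh_p(0)=1$, are stated correctly and suffice.
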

The estimate \eqref{corestNine} is known in the Euclidean case (see for example \cite{gs,S} for $p=2$ and $\cite{KP}$ for $1<p<+\infty$). The method of proof of \cite{gs,KP} is simpler and completely different from our method, and it seems that can be adapted to the Finsler case, but allows only to get a much worst estimate (see Remark 5.2 below).

The structure of the paper is the following. In the second section, we review some basic tools used in the paper as the Finsler norms, anisotropic curvatures and some properties of the first eigenvalue $\lambda_{1}(\beta,\Omega)$, as well as the $\mathcal P-$function method. In the third section, we discuss about a suitable one-dimensional $p$-Laplace eigenvalue problem. Finally, in the fourth section we prove two comparison results and in Section \ref{main_sec}, we give the proofs of theorems \ref{main1} and \ref{main2}, and of corollaries \ref{cor_H} and \ref{corestN}.

\section{Notation and preliminaries}\label{notations_sec}

Let $1<p<+\infty$; throughout this paper we will denote by $p'$ the conjugate exponent of $p$, that is the number such that $\frac 1p +\frac 1{p'}=1$. Moreover, we will call a domain of $\mathbb R^{n}$ a connected open set.

\subsection{The Finsler norm}
Throughout the paper we will consider a convex, even, $1-$homogeneous function 
\[
\xi\in \R^{N}\mapsto F(\xi)\in [0,+\infty[,
\] 
that is a convex function such that
\begin{equation}
\label{eq:omo}
F(t\xi)=|t|F(\xi), \quad t\in \R,\,\xi \in \R^{N}, 
\end{equation}
 and such that
\begin{equation}
\label{eq:lin}
a|\xi| \le F(\xi),\quad \xi \in \R^{N},
\end{equation}
for some constant $a>0$.  Under this hypothesis it is easy to see that there exists $b\ge a$ such that
\[
F(\xi)\le b |\xi|,\quad \xi \in \R^{N}.
\]
Throughout the paper we will also assume that $F$ belongs to $C^{3,\alpha}(\mathbb{R}^N\setminus \{0\})$, and that
\begin{equation}
\label{strong}
\nabla^{2}_{\xi}[F^{p}](\xi)\text{ is positive definite in }\R^{N}\setminus\{0\}.
\end{equation}

The hypothesis \eqref{strong} on $F$ ensures that the operator 
\[
\Qp[u]:= \dive \left(\frac{1}{p}\nabla_{\xi}[F^{p}](\nabla u)\right)
\] 
is elliptic, hence there exists a positive constant $\gamma$ such that
\begin{equation*}
\sum_{i,j=1}^{n}{\nabla^{2}_{\xi_{i}\xi_{j}}[F^{p}](\eta)
  \xi_i\xi_j}\ge
\gamma |\eta|^{p-2} |\xi|^2, 
\end{equation*}
for some positive constant $\gamma$, for any $\eta \in
\R^N\setminus\{0\}$ and for any $\xi\in \R^N$. 

For $p\ge 2$, the condition  
\begin{equation*}
\nabla^{2}_{\xi}[F^{2}](\xi)\text{ is positive definite in }\R^{N}\setminus\{0\},
\end{equation*}
is sufficient for the validity of \eqref{strong}.

The polar function $F^o\colon\R^N \rightarrow [0,+\infty[$ 
of $F$ is defined as
\begin{equation*}
F^o(v)=\sup_{\xi \ne 0} \frac{\langle \xi, v\rangle}{F(\xi)}. 
\end{equation*}
 It is easy to verify that also $F^o$ is a convex function
which satisfies properties \eqref{eq:omo} and
\eqref{eq:lin}. Furthermore, 
\begin{equation*}
F(v)=\sup_{\xi \ne 0} \frac{\langle \xi, v\rangle}{F^o(\xi)}.
\end{equation*}
From the above property it holds that
\begin{equation}\label{prodscal}
| \xi\cdot \eta | \le F(\xi) F^{o}(\eta) \qquad \forall \xi, \eta \in \R^{N}.
\end{equation}
The set
\[
\mathcal W = \{  \xi \in \R^N \colon F^o(\xi)< 1 \}
\]
is the so-called Wulff shape centered at the origin. We put
$\kappa_N=|\mathcal W|$, where $|\mathcal W|$ denotes the Lebesgue measure
of $\mathcal W$. More generally, we denote with $\mathcal W_r(x_0)$
the set $r\mathcal W+x_0$, that is the Wulff shape centered at $x_0$
with measure $\kappa_Nr^N$, and $\mathcal W_r(0)=\mathcal W_r$.

The following properties of $F$ and $F^o$ hold true:
\[
\begin{array}{ll}
 F_{\xi}(\xi) \cdot \xi = F(\xi), \quad  F_{\xi}^{o} (\xi)\cdot \xi 
= F^{o}(\xi), & \forall \xi \in
\R^N\setminus \{0\}, \\[.15cm]
 F(F_{\xi}^o(\xi))=F^o( F_{\xi}(\xi))=1,&\forall \xi \in
\R^N\setminus \{0\}, 
\\[.15cm]
F^o(\xi)  F_{\xi}(F_{\xi}^o(\xi) ) = F(\xi) 
F_{\xi}^o( F_{\xi}(\xi) ) = \xi & \forall \xi \in
\R^N\setminus \{0\},
\end{array}
\]
where $F_{\xi}=\nabla F(\xi)$. The anisotropic distance of $x\in\overline\Omega$ to the boundary of a bounded domain $\Omega$ is the function 
\begin{equation*}
d_{F}(x)= \inf_{y\in \de \Omega} F^o(x-y), \quad x\in \overline\Omega.
\end{equation*}

We stress that when $F=|\cdot|$ then $d_F=d_{\mathcal{E}}$, the Euclidean distance function from the boundary.

It is not difficult to prove that $d_{F}$ is a uniform Lipschitz function in $\overline \Omega$ and
\begin{equation*}
  F(\nabla d_F(x))=1 \quad\text{a.e. in }\Omega.
\end{equation*}
Obviously, $d_F\in W_{0}^{1,\infty}(\Omega)$. 
Finally, the anisotropic inradius of $\Omega$ is the quantity
\begin{equation*}
R_{F}(\Omega)=\max \{d_{F}(x),\; x\in\overline\Omega\},
\end{equation*}
that is the radius of the largest Wulff shape $\mathcal W_{r}(x)$ contained in $\Omega$.

\subsection{Anisotropic curvatures}
We recall here some basic facts on anisotropic curvatures and on an integration formula in anisotropic normal coordinates, referring the reader mainly to \cite{cm} for the details.

Let $\Omega$ be a $C^{2}$ bounded domain, and $x\in\de\Omega$.
The anisotropic outer normal $n^{F}_{\Omega}$ to $\de \Omega$ is given by
  \[
  n^F_{\Omega}(y)= F_{\xi}(\nu_{\Omega}(y)),\qquad y\in \de\Omega,
  \]
  where $\nu_{\Omega}(y)$ is the Euclidean outer normal to $\de \Omega$ at $y$.
It holds 
 \[
  F^o(n^F_{\Omega})=1.  
  \]
 The anisotropic principal curvatures $\kappa^{F}_{1},\ldots,\kappa_{N-1}^{F}$ are defined as the eigenvalues of the anisotropic Weingarten map
\[
d n^{F}_{\Omega}\colon T_{y}\de\Omega \to T_{n_{\Omega}^{F}(y)}\mathcal W,
\]
where $T_{y}\de\Omega$ is the tangent space to $\de\Omega$ at $y$ (see \cite{wx}). The anisotropic mean curvature of $\de\Omega$ at a point $y$ is defined as
  \begin{equation*}%  \label{H_F} 
  \mathcal H_{F}(y)= \kappa_{1}^{F}(y)+\ldots \kappa_{N-1}^{F}(y) , \quad y\in
  \de \Omega.
  \end{equation*}
A domain such that $\mathcal H_{F}(y)\ge 0$ for any $y\in \de\Omega$ is said to be anisotropic mean convex. 
   
Actually, the anisotropic principal curvatures can be equivalently defined as follows (\cite{cm}, in particular  Remark 5.9). Let us recall that the anisotropic distance function $d_{F}$ is $C^{2}$ in a tubular neighborhood of $\de\Omega$, hence we can define the matrix-valued function
\[
W(y)=-F_{\xi\xi}(\nabla d_{\Omega}(y))\nabla^{2}d_{\Omega}(y), \quad y\in\de\Omega.
\]
It holds that for any $v\in\R^{N}$, $W(y)v$ belongs to $T_{y}$. Then it is possible to define the map $\overline W(y)\colon T_{y}\to T_{y}$,  as $\overline W(y)w=W(y)w$, $w\in T_{y}$. 
The matrix $\overline W(y)$ is, in general, not symmetric, but the eigenvalues are real numbers. These eigenvalues $\kappa_{1}^{F}(y)\le \kappa_{2}^{F}(y)\le \ldots\le \kappa_{N-1}^{F}(y)$ are the anisotropic principal curvatures of $\de \Omega$ at the point $y$, and the definition is equivalent to the preceding one. Then it holds that
\[  
  \mathcal H_{F}(y)= 
  \dive\left[ 
    F_{\xi}\left(-{\nabla d_{F}(y)}\right) \right] = \tr(W(y)).
\]
(see also \cite[Section 3]{wx}).

One of the tools that will be used in what follows, is a change of variable formula in anisotropic normal coordinates, based on the fact that the set of the singular points of the anisotropic distance function from the boundary has Lebesgue measure zero (\cite{cm}). To state this result, let us define the function
\[
\Phi(y,t)= y-tF_{\xi}(\nu_{\Omega}(y)),\qquad y\in \de\Omega, \quad t\in \R,
\]
and the function $\ell(y)$, $y\in \de\Omega$, 
\[
\ell(y)=\sup\{d_{F}(z),\, z\in\Omega\text{ and }y\in \Pi(z) )\},
\]
where $\Pi(z)=\{\eta\in \de\Omega\colon d_{F}(z)=F^{o}(z-\eta)\}$ is the set of projections of $z$ in $\de\Omega$.
\begin{thm} \cite[Theorem 7.1]{cm}.\label{change_normal}
For every $h\in L^1(\Omega)$, it holds
\[
\int_\Omega h(x)dx=\int_{\partial\Omega} F(\nu_\Omega(y))\int_0^{\ell(y)}h(\Phi(y,t))J(y,t)\,dt\,d\mathcal H^{N-1}(y),
\]
where $J(y,t)=\Pi_{i=1}^{N-1}(1- t \kappa^{F}_i(y))$.
\end{thm}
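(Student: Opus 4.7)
The plan is to apply the classical area formula to the anisotropic normal exponential map $\Phi$, after identifying its image with a full-measure subset of $\Omega$. Let $C_F$ denote the anisotropic cut locus, i.e.\ the subset of $\Omega$ where $d_F$ fails to be $C^1$ (equivalently, the points with more than one anisotropic projection onto $\de\Omega$), and set $R=\Omega\setminus C_F$. The first step is to invoke from \cite{cm} that $C_F$ has Lebesgue measure zero, so it is enough to integrate $h$ over $R$. On $R$, each point $z$ has a unique projection $y=\pi_F(z)\in\de\Omega$; combining $F(\nabla d_F)=1$ with the identity $F(v)\,F_{\xi}^o(F_{\xi}(v))=v$ listed in Section \ref{notations_sec}, one verifies that $z=y-d_F(z)\,F_{\xi}(\nu_{\Omega}(y))=\Phi(y,d_F(z))$, and by the definition of $\ell$ one has $0<d_F(z)<\ell(y)$.

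The second step is to show that $\Phi$ is a $C^1$-bijection from the open set $\Omega^{*}:=\{(y,t):y\in\de\Omega,\,0<t<\ell(y)\}$ onto $R$, and to compute its Jacobian. Injectivity follows from the uniqueness of the anisotropic projection on $R$, while the $C^1$-regularity of $\Phi$ is a consequence of $\de\Omega\in C^2$ and $F\in C^{3,\alpha}(\R^N\setminus\{0\})$. Fix $y\in\de\Omega$ and pick a basis $\{e_1,\ldots,e_{N-1}\}$ of $T_y\de\Omega$ that diagonalises the anisotropic Weingarten map $\overline W(y)$, with eigenvalues $\kappa_1^F(y),\ldots,\kappa_{N-1}^F(y)$. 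Since $\de_{e_i} n^F_{\Omega}=F_{\xi\xi}(\nu_{\Omega})\,\de_{e_i}\nu_{\Omega}=\overline W(y)e_i$ lies in $T_y\de\Omega$, one obtains $\de_{e_i}\Phi(y,t)=(1-t\kappa_i^F(y))\,e_i$, while $\de_t\Phi(y,t)=-F_{\xi}(\nu_{\Omega}(y))$ has normal component $-F_{\xi}(\nu_{\Omega})\cdot\nu_{\Omega}=-F(\nu_{\Omega}(y))$ by the Euler identity for $1$-homogeneous functions. The tangential components of $\de_t\Phi$ contribute nothing to the determinant once the $\de_{e_i}\Phi$ span $T_y\de\Omega$, so the Jacobian of $\Phi$ equals $F(\nu_{\Omega}(y))\prod_{i=1}^{N-1}(1-t\kappa_i^F(y))$.

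The final step is to apply the area formula for $C^1$-diffeomorphisms to $\Phi$ on $\Omega^{*}$, equipped with the product measure $d\mathcal H^{N-1}(y)\,dt$. This gives
$$\int_R h(x)\,dx=\int_{\de\Omega}F(\nu_{\Omega}(y))\int_0^{\ell(y)}h(\Phi(y,t))\,J(y,t)\,dt\,d\mathcal H^{N-1}(y),$$
and since $R$ has full measure in $\Omega$, the left-hand side coincides with $\int_\Omega h(x)\,dx$, which closes the argument. The main obstacle is the measure-zero property of $C_F$, which is non-trivial in the Finsler setting and is one of the central preparatory results of \cite{cm}; conditional on it, the Jacobian computation is routine, the only subtlety being that $\overline W(y)$ is not Euclidean-symmetric in general, so its diagonalisation must be performed intrinsically on $T_y\de\Omega$ rather than via an Euclidean orthonormal frame.
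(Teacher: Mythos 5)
The paper gives no proof of this statement at all: Theorem \ref{change_normal} is quoted verbatim from \cite[Theorem 7.1]{cm}, so there is no internal argument to compare against. Your sketch reconstructs the standard proof of such normal-coordinate formulas and it is essentially sound. A few remarks on the details.

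The identification $z=\Phi(y,d_F(z))$ for $z$ in the regular set $R$ is correct: from \eqref{dentro_fuori}, $\nabla d_F(z)=-\nu_\Omega(y)/F(\nu_\Omega(y))$, while $\nabla d_F(z)=F_\xi^o(z-y)$, and then the identity $F^o(\xi)\,F_\xi(F_\xi^o(\xi))=\xi$ with $\xi=z-y$, together with $F_\xi$ being $0$-homogeneous and odd, gives $z-y=-d_F(z)F_\xi(\nu_\Omega(y))$. The Jacobian computation is also correct, and the concern you flag about $\overline W(y)$ not being Euclidean-symmetric resolves itself cleanly: since $\overline W(y)$ maps $T_y\de\Omega$ into $T_y\de\Omega$, the tangential block of $D\Phi$ is the endomorphism $I-t\overline W(y)$, whose determinant is basis-independent and equals $\prod_i(1-t\kappa_i^F(y))$ by definition of the $\kappa_i^F$; there is no need to diagonalize explicitly or to worry about a non-orthonormal eigenbasis skewing the volume factor. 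The normal column of $D\Phi$ contributes exactly $-F_\xi(\nu_\Omega)\cdot\nu_\Omega=-F(\nu_\Omega)$ (Euler's identity) to the determinant after column reduction, giving the claimed Jacobian. The genuinely nontrivial ingredients — that the anisotropic cut locus $C_F$ is Lebesgue-null, that projections are single-valued off $C_F$, that $d_F<\ell(y)$ strictly there, and that $\Phi$ is a $C^1$ bijection onto $R$ — are precisely the structural results of \cite{cm}, and you correctly defer to that reference for them, just as the paper itself does. So this is a faithful sketch of the argument behind the cited theorem rather than an alternative route.
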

The Jacobian of $\Phi$, that is $J(y,t)=\Pi_{i=1}^{N-1}(1- t \kappa^{F}_i(y))$, is positive, being $1- t \kappa^{F}_i(y)>0$ for any $i=1,\ldots,N-1$ (\cite[Lemma 5.4]{cm}), and if $\Omega$ is anisotropic mean convex it holds that
\begin{equation}
\label{laplaciano_cambio}
\frac{-\frac{d}{dt}\left[J(y,t)\right]}{J(y,t)}=\sum_{i=1}^{N-1}\frac{\kappa^{F}_i(y)}{1-t\kappa^{F}_i(y)}\ge \frac{\mathcal H_F(y)}{1-\frac{\mathcal H_F(y)}{N-1} t}\geq 0\qquad\forall y\in\partial\Omega,\; t\in[0,\ell (y)[.
\end{equation}
The equality in \eqref{laplaciano_cambio} is a straightforward computation, while the first inequality follows by applying the Newton inequalities exactly as done in  \cite[Proposition 2.6]{LLL}.

To conclude this subsection, we recall an observation that will be used in the following. For any $x\in \Omega$ such that $\Pi(x)=\{y\}$, we have (\cite[Lemma 4.3]{cm}) that
\begin{equation}
\label{dentro_fuori}
\nabla d_F(x)=-\frac{\nu_\Omega(y)}{F(\nu_{\Omega}(y))}.
\end{equation}

\subsection{The first eigenvalue of the Robin Finsler $p$-Laplacian}
In this section we review an existence and characterization result for the first eigenpair of the Robin Finsler $p-$Laplacian. Recall that, in the case $\beta>0$, the existence result has been proved for example in \cite{pota}; for the negative values of the boundary parameter, it follows by adapting the proof of \cite{KP} for the Euclidean $p$-Laplacian.

\begin{prop}
Let $1<p<+\infty$, $\beta\in\R$, and let $\Omega$ be a bounded Lipschitz domain in $\R^{N}$. Then there exists a function $u\in C^{1,\alpha}(\Omega)$ which realizes the minimum in \eqref{eigint} and satisfies problem \eqref{pb_Robin1}.  
Moreover, the first eigenvalue $\lambda_1(\beta,\Omega)$ of \eqref{pb_Robin1} is simple and the corresponding eigenfunctions are positive (or negative) in $\Omega$.
\end{prop}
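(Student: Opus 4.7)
The argument splits into three parts: existence of a minimizer via the direct method of the calculus of variations, the passage to the Euler--Lagrange problem together with regularity, and finally positivity and simplicity of the first eigenfunction.

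\emph{Existence.} First I would show that the Rayleigh quotient is bounded below. For $\beta\ge 0$ this is immediate, since both terms in the numerator are nonnegative. The delicate case is $\beta<0$: combining the norm equivalence $a|\xi|\le F(\xi)\le b|\xi|$ with the Ehrling-type trace interpolation
\[
\int_{\partial\Omega}|\psi|^p\,d\mathcal H^{N-1}\le \eps\int_\Omega|\nabla\psi|^p\,dx+C_\eps\int_\Omega|\psi|^p\,dx,
\]
the negative boundary term can be absorbed into the volume and gradient terms for $\eps$ small, giving $\lambda_1(\beta,\Omega)>-\infty$. Normalizing a minimizing sequence $\{u_n\}\subset W^{1,p}(\Omega)$ by $\int_\Omega|u_n|^p\,dx=1$, the same inequality yields that $\{u_n\}$ is bounded in $W^{1,p}(\Omega)$. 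Extracting a subsequence $u_n\rightharpoonup u$, the compactness of $W^{1,p}(\Omega)\hookrightarrow L^p(\Omega)$ and of the trace $W^{1,p}(\Omega)\hookrightarrow L^p(\partial\Omega)$ (which requires the Lipschitz hypothesis on $\Omega$) together with the convexity of $\xi\mapsto F(\xi)^p$, giving weak lower semicontinuity of the gradient term, shows that $u$ attains the infimum.

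\emph{Euler--Lagrange equation and regularity.} Computing the first variation of the Rayleigh quotient against smooth test functions shows that the minimizer $u$ is a weak solution of \eqref{pb_Robin1}. Interior $C^{1,\alpha}$ regularity follows from the Tolksdorf/DiBenedetto theory for quasilinear operators of $p$-Laplacian type, since by \eqref{strong} the operator $\mathcal Q_p$ has the required $p$-ellipticity and the coefficients are smooth away from $\{\nabla u=0\}$; regularity up to the boundary is then obtained from Lieberman's theory for nonlinear oblique boundary value problems applied to the Robin condition.

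\emph{Positivity and simplicity.} Because $F$ is even, $F(\nabla|u|)=F(\nabla u)$ almost everywhere, so $|u|$ is also a minimizer. Thus the nonnegative function $|u|$ solves the same Euler--Lagrange problem, and the Finsler analogue of the V\'azquez strong maximum principle forces $|u|>0$ in $\Omega$; replacing $u$ by $-u$ if necessary, one may assume $u>0$. Simplicity is then proved by the D\'\i az--Saa / Belloni--Kawohl technique: given two positive first eigenfunctions $u,v$, I would test the respective equations against $(u^p-v^p)/u^{p-1}$ and $(v^p-u^p)/v^{p-1}$, add the resulting identities, and invoke an anisotropic Picone inequality relying on the strict convexity of $F^p$ to conclude that $\nabla(u/v)\equiv 0$, hence $u=cv$. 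The main technical obstacle is that $u$ and $v$ need not vanish on $\partial\Omega$, so the quotient test functions produce nontrivial boundary contributions; these have to be shown to cancel by using the Robin boundary condition precisely in the form given in \eqref{pb_Robin1}, which is what makes the argument close both for $\beta>0$ and $\beta<0$.
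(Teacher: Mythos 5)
Your existence argument mirrors the paper's exactly: absorb the negative boundary term via the Ehrling/Grisvard trace interpolation, normalize a minimizing sequence, extract a weak limit, and use weak lower semicontinuity of $\int_\Omega F(\nabla\cdot)^p$ together with strong $L^p(\partial\Omega)$ convergence of traces. For positivity the paper uses $L^\infty$-boundedness plus the Harnack inequality where you invoke the V\'azquez strong maximum principle; both are legitimate and interchangeable here. Regularity is cited from Tolksdorf in both cases.

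Where you genuinely diverge is the simplicity proof. The paper follows Belloni--Kawohl's \emph{hidden convexity}: for two normalized positive minimizers $u,v$ it plugs $w_t=(tu^p+(1-t)v^p)^{1/p}$ directly into the Rayleigh quotient. Since $w_t^p=tu^p+(1-t)v^p$ \emph{exactly}, the Robin boundary integral contributes linearly and needs no estimate at all, while the gradient term is handled by the pointwise convexity inequality \eqref{eqconc}; one concludes that the convexity inequality is an equality, hence $\nabla u/u\equiv\nabla v/v$. You instead propose the D\'iaz--Saa route: test the two PDEs with the quotient functions $(u^p-v^p)/u^{p-1}$ and $(v^p-u^p)/v^{p-1}$, add, and invoke an anisotropic Picone inequality. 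This works as well, and the boundary-term ``obstacle'' you flag is in fact not one: with these particular test functions the boundary contributions are $\beta\int_{\partial\Omega}F(\nu)(u^p-v^p)$ and $\beta\int_{\partial\Omega}F(\nu)(v^p-u^p)$, which cancel identically upon addition, just as the right-hand-side volume terms do. The remaining technical care (regularizing with $u+\eps$, $v+\eps$ so that the quotients lie in $W^{1,p}$, then passing to the limit) is analogous to what the paper does in its Proposition 4.1. Net comparison: the paper's variational argument is a bit leaner because it never leaves the Rayleigh quotient and sidesteps Picone entirely, whereas yours works at the PDE level and requires the anisotropic Picone inequality — which, notice, is essentially the same convexity estimate \eqref{eqconc} in disguise. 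Both are correct; the paper's avoids the bookkeeping you were worried about by construction.
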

\begin{proof}
{For any $\beta\in\R$, let us consider $\{u_n\}_{n\in\N}$ a minimizing normalized sequence for $\lambda_1(\beta,\Omega)$, that is
\begin{multline}
\label{min_seq}
\lim_{n\to+\infty} \int_\Omega F(\nabla u_n)^p dx +\beta \ds\int_{\de\Omega}|u_n|^pF(\nu_\Omega) d\cH^{N-1} = \lambda_1(\beta,\Omega), \\ \text{with}\int_\Omega |u_n|^p dx=1 \ \forall n\in\N.
\end{multline}
For any $\varepsilon\in(0,1)$, there exists a constant $K_\varepsilon>0$ (see \cite[Th. 1.5.1.10]{G}) such that
\begin{equation}
\label{grisvard_est}
\int_{\partial \Omega}|u_n|^pdx\le\varepsilon \int_{\Omega}|\nabla u_n|^pdx+K_\varepsilon\int_\Omega |u_n|^pdx\quad\forall n\in\N.
\end{equation}
By using \eqref{eq:lin}, \eqref{min_seq} and \eqref{grisvard_est} for suitable choice of $\varepsilon$ depending on $\beta$, we deduce that
\[
\sup_{n\in\N} \int_\Omega |\nabla u_n|^p dx <+\infty.
\]
Therefore $\{u_n\}_{n\in\N}$ is bounded in $W^{1,p}(\Omega)$ and hence there exists a subsequence $\{u_{n_j}\}$ and $u\in W^{1,p}(\Omega)$ such that $u_{n_j}\to u$ strongly in $L^p(\Omega)$ and almost everywhere, and $\nabla u_{n_j}\rightharpoonup \nabla u$ weakly in $L^p(\Omega)$. As a consequence, by the compactness of the
trace operator (see for example \cite[Cor. 18.4]{leoni}), $u_{n_j}$ (up to another subsequence) strongly converges to $u$ in $L^p(\de \Omega)$.}

Hence, the existence of the minimizer of \eqref{eigint} follows by the lower semicontinuity:
\[
\begin{split}
\lambda_1(\beta,\Omega)&\leq \int_\Omega F(\nabla u)^p dx +\beta \ds\int_{\de\Omega}|u|^pF(\nu_\Omega) d\cH^{N-1}\\&
\leq \liminf_{j\to+\infty} \int_\Omega F(\nabla u_{n_j})^p dx +\beta \ds\int_{\de\Omega}|u_{n_j}|^pF(\nu_\Omega) d\cH^{N-1}=\lambda_1(\beta,\Omega).
\end{split}
\]

Now, let us observe that if $u$ is a minimizer of \eqref{eigint}, then $|u|$ is also a minimizer, so we can assume $u\ge 0$. Moreover, $u$ is in $L^{\infty}(\bar\Omega)$ (see \cite{pota} if $\beta\ge 0$, or argue as in \cite[Theorem 4.3]{lea} if $\beta<0$). Hence by Harnack inequality, $u>0$ in $\Omega$. Moreover, by standard regularity results (see for example \cite{to}), $u\in C^{1,\alpha}(\Omega)$. Finally, to prove the simplicity we follow an argument contained in \cite{BK}. So, let fix two positive eigenfunctions $u$ and $v$, normalized as $\|u\|_{L^{p}(\Omega)}=\|v\|_{L^{p}(\Omega)}=1$, and define $w_t=\left( t u^p+(1-t)v^p  \right)^{1/p}$, with $t\in[0,1]$. We have that $\|w_{t}\|_{L^{p}(\Omega)}=1$. Then, by the convexity of $F^p$, it holds
\begin{equation}\label{eqconc}\begin{split}
F(\nabla w_t)^p & =w_t^p F\left(\frac{t u^p \frac{\nabla u}u+(1-t)v^p\frac{\nabla v}{v}}{tu^p+(1-t)v^p}\right)^p\\
 & \leq \mu_t^p\left[\frac{t u^p}{tu^p+(1-t)v^p}F\left(\frac{\nabla u}u\right)^p+\frac{(1-t)v^p}{tu^p+(1-t)v^p}F\left(\frac{\nabla v}v\right)^p\right]\\&=t F(\nabla u)^p+(1-t)F(\nabla v)^p.
\end{split}
\end{equation}
By using \eqref{eqconc} and the fact that $u$ and $v$ are both minimizers, we obtain
\begin{equation*}
\begin{split}
\lambda_1(\beta,\Omega)&\leq \int_\Omega F(\nabla w_t)^p dx +\beta \ds\int_{\de\Omega} w_t^pF(\nu_\Omega) d\cH^{N-1}\\
&\leq t\left(\int_\Omega F(\nabla u)^p dx +\beta \ds\int_{\de\Omega}u^pF(\nu_\Omega) d\cH^{N-1}\right)\\
&+ (1-t)\left(\int_\Omega F(\nabla v)^p dx +\beta \ds\int_{\de\Omega}v^pF(\nu_\Omega) d\cH^{N-1}\right)\\
&=\lambda_1(\beta,\Omega)
\end{split}
\end{equation*}
So $w_t$ is a minimizer for \eqref{eigint} and the inequality \eqref{eqconc} holds as equality. Therefore $\frac{\nabla u}u=\frac{\nabla v}v$ and, consequently, $\nabla (\log u-\log v)=0$, that implies the result.
\end{proof}

\subsection{The $\mathcal P$-function method}
In order to give an optimal lower estimate for  $\lambda_{1}(\beta,\Omega)$
in the case $\beta>0$, we will use the so-called $\mathcal P$-function method.
Let us consider the Dirichlet boundary value problem 
\begin{equation}
\label{pb_gen}
\begin{cases}
-\mathcal Q_{p}w =f(w) &\text{in }\Omega\\
w=0 &\text{on }\de\Omega,
\end{cases}
\end{equation}
where $f$ is a nonnegative $C^{1}(0,+\infty)\cap C^{0}([0,+\infty[)$ function. We recall the following result, based on \cite[Prop. 4.1]{CFV}.

\begin{thm}[\cite{DPGGLB,DPdBG}]
\label{prsperb}
Let $\Omega$ be a bounded $C^{2}$ domain in $ \R^{N}$, $ N \ge 2$, with nonnegative anisotropic mean curvature $\mathcal H_{F}$ of $\de\Omega$, and let $w>0$ be a solution to the problem  \eqref{pb_gen}, 
then
\begin{equation*}
\mathcal P(x):=\frac{p-1}{p}F^p(\nabla w(x))-\int_{w(x)}^{\max_{\bar \Omega} w}f(s)ds \le 0  \quad \text{ in } \overline{\Omega}.
\end{equation*}
\end{thm}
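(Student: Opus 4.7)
The plan is to follow the classical $\mathcal P$-function method in the Finsler setting, as developed in \cite{CFV} and adapted in \cite{DPGGLB,DPdBG}. Set $M=\max_{\bar\Omega} w$. Since $w>0$ in $\Omega$ and $w=0$ on $\partial\Omega$, there is an interior point $x_M\in\Omega$ with $w(x_M)=M$ and $\nabla w(x_M)=0$, hence $\mathcal P(x_M)=0$. I want to show that this value is the maximum of $\mathcal P$ over $\bar\Omega$.

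The first step is a Bochner-type computation on the open set $U=\{\nabla w\neq 0\}\subset\Omega$, where $F$ is $C^{3,\alpha}$ and $[F^p]_{\xi\xi}$ is positive definite. I introduce the linearized Finsler operator $L\varphi=\partial_i(a^{ij}(\nabla w)\,\partial_j\varphi)$ with $a^{ij}(\xi)=\tfrac1p\,\partial^2_{\xi_i\xi_j}[F^p](\xi)$. Differentiating the equation $-\mathcal Q_p w=f(w)$, combining with the expression for $\nabla\mathcal P$, and applying a Kato-type algebraic estimate for the positive-definite Hessian $[F^p]_{\xi\xi}$ (which is where hypothesis \eqref{strong} enters crucially), one obtains a differential inequality of the form
\[
L\mathcal P + \langle b,\nabla\mathcal P\rangle \geq 0 \quad\text{on }U,
\]
for some bounded vector field $b$ depending on $\nabla w$, $D^2 w$, and $f'(w)$. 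I would transcribe the actual computation from \cite{CFV}; it is technically dense but by now standard in the Finsler $\mathcal P$-function literature.

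By the strong maximum principle, $\mathcal P$ cannot attain an interior maximum in $U$ unless it is locally constant. So the maximum of $\mathcal P$ on $\bar\Omega$ is realized either (a) at an interior critical point of $w$, or (b) at a boundary point. In case (a), at any $x_0\in\Omega$ with $\nabla w(x_0)=0$,
\[
\mathcal P(x_0)=-\int_{w(x_0)}^{M}f(s)\,ds\leq 0,
\]
since $f\geq 0$ and $w(x_0)\leq M$. In case (b), suppose for contradiction that $\max_{\bar\Omega}\mathcal P>0$ is attained at $x_0\in\partial\Omega$. Hopf's lemma applied to $w$ (using $-\mathcal Q_p w=f(w)\geq 0$ and $w>0$ in $\Omega$) gives $\nabla w(x_0)\neq 0$, so $x_0\in\overline U$; Hopf's lemma applied to the elliptic inequality for $\mathcal P$ then yields $\partial_\nu\mathcal P(x_0)>0$. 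On the other hand, a direct boundary computation — expanding $\mathcal Q_p w$ at $x_0$ into its normal and tangential-to-$\partial\Omega$ components, using that the tangential gradient of $w$ vanishes on $\partial\Omega$ and that the tangential trace of the anisotropic Hessian reconstructs $\mathcal H_F$ via the anisotropic Weingarten map of Section 2.2 — gives
\[
\partial_\nu\mathcal P(x_0) = -\mathcal H_F(x_0)\,Q\bigl(w_\nu(x_0)\bigr),
\]
where $Q$ is a manifestly nonnegative expression in the normal derivative. The hypothesis $\mathcal H_F\geq 0$ then forces $\partial_\nu\mathcal P(x_0)\leq 0$, contradicting Hopf. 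Hence case (b) also satisfies $\mathcal P(x_0)\leq 0$, and $\mathcal P\leq 0$ on $\bar\Omega$.

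The principal obstacle is the boundary step (b): producing the anisotropic analogue of the Euclidean identity $\Delta w\big|_{\partial\Omega}=w_{\nu\nu}+(N-1)H\,w_\nu$ for $w=0$ on $\partial\Omega$, and verifying that the trace of the relevant Finsler Weingarten-type operator is exactly $\mathcal H_F$ with the correct sign. This requires careful book-keeping with the formulas of Section 2.2 and the relation \eqref{dentro_fuori} between Euclidean and anisotropic normals. The interior Bochner inequality of the first step, while algebraically long, is by now a routine element of the $\mathcal P$-function toolbox in the Finsler $p$-Laplacian framework and may be invoked from the cited works.
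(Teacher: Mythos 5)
The paper does not actually prove Theorem~\ref{prsperb}: it is stated with a citation to \cite{DPGGLB,DPdBG} and the remark that it rests on \cite[Prop.~4.1]{CFV}, and no proof environment follows. So there is no in-paper argument to compare your proposal against; I can only assess whether your sketch is a faithful outline of the argument in the cited sources, and it is. You correctly identify the two pillars: (i) the interior Bochner-type inequality $L\mathcal P + \langle b,\nabla\mathcal P\rangle\ge 0$ on $U=\{\nabla w\ne 0\}$, which is precisely the content of \cite[Prop.~4.1]{CFV} and where the strong ellipticity hypothesis \eqref{strong} enters; and (ii) the boundary computation showing that, on $\de\Omega$, the conormal derivative of $\mathcal P$ is a nonpositive multiple of $\mathcal H_F$, so that $\mathcal H_F\ge0$ forbids a Hopf-type strict outward growth of $\mathcal P$. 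Combined with the trivial sign of $\mathcal P$ at interior critical points of $w$, this gives $\mathcal P\le 0$. That is indeed the structure of the proofs in \cite{DPGGLB,DPdBG}.

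Two small points you should tighten if you were to write this out. First, the step ``so the maximum of $\mathcal P$ is realized either at an interior critical point of $w$ or at a boundary point'' is slightly too quick: the strong maximum principle leaves open the possibility that $\mathcal P$ is constant, equal to its maximum, on a connected component $V$ of $U$. One must then observe that $\de V\cap\Omega$ cannot be empty (otherwise $V$ would be open and closed in the connected set $\Omega$, forcing $U=\Omega$, impossible since $w$ has an interior maximum point), so $\overline V$ contains a critical point of $w$, at which $\mathcal P\le 0$ by the case-(a) computation; by continuity the constant value is $\le 0$. Second, the Hopf lemma you invoke for $w$ and for $\mathcal P$ is the version for degenerate quasilinear/anisotropic operators (the linearized operator $L$ is elliptic only where $\nabla w\ne 0$), so one should cite a Vazquez-type strong maximum/Hopf result rather than the classical uniformly elliptic one. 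Beyond these, your sketch defers the same computational content (the Bochner identity of \cite{CFV} and the anisotropic analogue of the boundary trace identity $\Delta w|_{\de\Omega}=w_{\nu\nu}+(N-1)Hw_\nu$) to citation, which is consistent with how the paper itself handles this result.
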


\begin{rem}
\label{rem_P_f} Theorem \ref{prsperb} implies that if $f=1$ and then $w=w_\Omega\in W_0^{1,p}(\Omega)$ is the stress function for $\Omega$, that is the solution of \eqref{pbtord} then
\begin{equation}
\label{pfunctss}
F(\nabla w_\Omega(x))\le \left[\frac{p}{p-1}(M(\Omega) -w_\Omega(x)) \right]^{\frac{1}{p}}
\end{equation}
where $M(\Omega)=\max_\Omega w_\Omega$.
\end{rem}

\section{A one dimensional $p$-Laplacian eigenvalue problem}\label{1D_subsec}
In this Section, we study a one-dimensional eigenvalue problem that will be useful to estimate $\lambda_1(\beta,\Omega)$ for both positive and negative values of the boundary parameter.

We first briefly summarize the definitions and some properties of the $p$-trigonometric and $p$-hyperbolic functions (see for example to \cite{LE,L,pee}).  We observe that these functions coincide with the standard trigonometric and hyperbolic functions when $p=2$.

Let us consider the function $\arccos_p:[0, 1]\to\R$ defined as
\begin{equation*}%\label{arcsin_def}
\arccos_{p}(x)=\ds\int_x^1\frac{dt}{\ds\left(1-{t^p}\right)^\frac 1p}.
\end{equation*}
Denote by $z(t)$ the inverse function of $\arccos_{p}$ which is defined on the interval $\left[0,\frac{\pi_p}{2}\right]$, where
\[
\pi_p=2\ds\int_{ 0}^{1}\frac{dt}{\ds\left(1-{t^p}\right)^\frac 1p}
= \frac{2\pi}{p\sin\frac{\pi}{p}}
.
\]
Therefore, the $p$-cosine function $\cos_p$ is the even function defined as the following periodic extension of $z(t)$:
\[
\cos_p(t)=\left\{ \begin{split}
& z(t) &&\text{if}\ \ t\in\left[0,\frac{\pi_p}{2}\right],\\
& -z(\pi_p-t) &&\text{if}\ \ t\in\left[\frac{\pi_p}{2}, \pi_p\right], \\
& \cos_p(-t)\  &&\text{if}\ \ t\in\left[-\pi_p, 0\right], \\
\end{split}
\right.
\]
and extended periodically to all $\R$, with period $2\pi_p$; the extension is continuosly differentiable on $\R$. If $p=2$, then $\cos_{p}x$ and $\arccos_{p}x$ coincides with the standard trigonometric functions $\cos x$ and $\arccos x$.

Let now also recall what the generalized hyperbolic cosine and arccosine functions are. The function $\arccosh_{p}$ is defined as
\begin{equation*}
\arccosh_p(x)=\ds\int_1^x\frac{1}{(t^p-1)^\frac1p}dt,\ x\in [1,+\infty[.
\end{equation*}
Then $\cosh_{p} \colon t\in [0,+\infty[\mapsto[1,+\infty[$ is its inverse function. It is strictly increasing in $[0,+\infty[$; it can be extended on all $\R$ as $\cosh_{p}(-t)=\cosh_{p}(t)$, $t>0$.
If $p=2$, $\cosh_{p}$ and $\arccosh_{p}$ coincides with the standard hyperbolic functions.

Now we consider the following eigenvalue problem in the unknown $X=X(s)$:
\begin{equation}
\label{1dim}
    \begin{cases}
    (|X'|^{p-2}X')'+\mu |X|^{p-2}X=0 \quad\text{in}\ (0,s_0),\\
    |X'(0)|^{p-2}X'(0)=0, \\
    |X'(s_{0})|^{p-2}X'(s_{0})+\beta |X(s_{0})|^{p-2}X(s_{0})=0, 
    \end{cases}
\end{equation}
where $s_{0}$ is a given positive number.

\begin{thm}\label{mu_thm}
Let $1<p<+\infty$ and $\beta \in \R$. Then there exists the smallest  eigenvalue $\mu$ of  \eqref{1dim}, which has the following variational characterization:
\[
\mu_{1}(\beta,s_0)=\inf_{\substack{v\in W^{1,p}(0,s_{0}) \\ \left|v'(0)\right|^{p-2}v'(0)=0}} \frac{\int_{0}^{s_{0}}\left|v'(s)\right|^{p}ds+\beta v(s_{0})^{p}}{\int_{0}^{s_{0}}\left|v(s)\right|^{p}ds}.
\]
 Moreover, the corresponding eigenfunctions are unique up to a multiplicative constant and have constant sign. The first eigenvalue $\mu_{1}(\beta,s_0)$ has the sign of $\beta$. Moreover if $\beta>0$, the first eigenfunction is
\begin{equation*}%\label{sol}
X(s)=\cos_p\left(\left(\frac{\mu_{1}(\beta,s_0)}{p-1}\right)^\frac{1}{p}s \right),\quad s\in (0,s_{0})
\end{equation*}
and the eigenvalue $\mu_{1}(\beta,s_0)$ is the first positive value that satisfies
\begin{equation*}
%\label{et}
\frac{\mu}{p-1}=\frac{\beta^{p'}}{\cos_p^{-p}\left(\left(\frac{\mu}{p-1}\right)^\frac 1p s_{0} \right)-1}.
\end{equation*}

If $\beta<0$, the first eigenfunction is
\begin{equation*}%\label{sol}
X(s)=\cosh_p\left(\left(\frac{-\mu_{1}(\beta,s_0)}{p-1}\right)^\frac{1}{p}s \right),\quad s\in (0,s_{0})
\end{equation*}
and in this case $\mu_{1}(\beta,s_0)$ is the unique negative value that satisfies
\begin{equation*}%\label{etN}
-\frac{\mu}{p-1} =\frac{|\beta|^{p'}}{1-\cosh_p^{-p}\left(\left(\frac{-\mu}{p-1}\right)^\frac 1p s_{0} \right)}.
\end{equation*}

\end{thm}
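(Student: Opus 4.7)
The plan is to prove the theorem in three stages: (i) existence and variational characterization of $\mu_1(\beta,s_0)$ via the direct method, (ii) positivity of a minimizer together with explicit integration of the Euler--Lagrange ODE, and (iii) derivation of the transcendental equations characterizing $\mu_1$, together with its sign.

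For stage (i), the direct method runs exactly as in the preceding proposition, but more simply since $W^{1,p}(0,s_0)\hookrightarrow C([0,s_0])$ is compact and the trace $v\mapsto v(s_0)$ is continuous; for $\beta<0$ an $\varepsilon$-Young inequality absorbs the boundary term. A minimizing sequence then converges weakly in $W^{1,p}$ and pointwise at $s_0$, and lower semicontinuity produces a minimizer $X$ which satisfies \eqref{1dim}, with the condition at $s=0$ arising as the natural (Neumann) boundary condition from integration by parts.

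For stage (ii), I replace $X$ by $|X|$ to assume $X\ge 0$; standard $C^1$ regularity and the 1D strong maximum principle then give $X>0$ on $[0,s_0]$. Multiplying the ODE by $X'$ and using $X'(0)=0$ yields the first integral
\[
(p-1)\,|X'(s)|^{p} + \mu\, X(s)^{p} = \mu\, X(0)^{p}.
\]
After normalizing $X(0)=1$, the identity $\cos_p^{\,p}+|\cos_p'|^{\,p}\equiv 1$ together with $X(0)=1$, $X'(0)=0$ forces $X(s)=\cos_p(\alpha s)$ with $\alpha=(\mu/(p-1))^{1/p}$ whenever $\mu>0$; the dual identity $\cosh_p^{\,p}-|\cosh_p'|^{\,p}\equiv 1$ delivers $X(s)=\cosh_p(\alpha s)$ with $\alpha=(-\mu/(p-1))^{1/p}$ when $\mu<0$. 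Uniqueness for the resulting separable first-order ODE simultaneously yields simplicity of the eigenvalue.

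For stage (iii), I substitute these explicit eigenfunctions into the Robin condition at $s_0$, simplify $|X'(s_0)|^{p-1}$ using the $p$-trigonometric and $p$-hyperbolic identities above, and raise to the power $p'$; this converts the boundary condition into the two displayed transcendental equations. The sign of $\mu_1$ is settled by a test-function argument: the Rayleigh quotient is strictly positive on nonzero $v$ when $\beta>0$, while $v\equiv 1$ yields $\mu_1\le \beta/s_0<0$ when $\beta<0$. The main subtlety I anticipate is the selection of the correct root when $\beta>0$: I plan to show that the map $\mu\mapsto \cos_p^{-p}\!\bigl((\mu/(p-1))^{1/p}s_0\bigr)-1$ is strictly increasing on $\bigl(0,(p-1)(\pi_p/(2s_0))^p\bigr)$, the range in which the candidate eigenfunction stays positive, and runs from $0$ to $+\infty$; since $\mu/(p-1)$ increases simultaneously from $0$ to a finite value, the equation has a unique root there, which must coincide with $\mu_1$ because any larger root would force $X$ to change sign, contradicting constant-sign minimization. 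In the $\beta<0$ case the analogous monotonicity on $(-\infty,0)$ delivers uniqueness of the negative root directly.
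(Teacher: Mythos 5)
Your proposal is correct and follows essentially the same route as the paper's proof: direct method for existence and the variational characterization, the first integral $(p-1)|X'|^p+\mu X^p=\mu X(0)^p$ obtained by multiplying the ODE by $X'$, explicit integration into $\cos_p$ (resp.\ $\cosh_p$) after normalizing $X(0)=1$, and then the Robin condition at $s_0$ to read off the transcendental equations. The only thing worth flagging is that the ``uniqueness for the resulting separable first-order ODE'' you invoke at the end of stage (ii) is not automatic (the right-hand side $(1-X^p)^{1/p}$ is non-Lipschitz at $X=1$, and $X\equiv 1$ is a spurious solution through the same initial data); what actually makes the argument go through, as in the paper, is that the integrated \emph{second-order} ODE gives $|X'|^{p-2}X'=-\mu_1\int_0^s X^{p-1}\,dt$, forcing $X'<0$ on $(0,s_0]$ when $\mu_1>0$, so $X<1$ there and the separation-of-variables integration is legitimate. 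Your extra observations (Rayleigh-quotient sign via $v\equiv1$, and the monotonicity argument for selecting the root) make explicit a couple of points the paper states without detail, but they do not change the method.
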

\begin{proof}
By standard arguments of Calculus of Variations, it is possible to show the existence of the first eigenvalue $\mu_{1}(\beta,s_0)$ in correspondence with a unique (up to a constant) positive solution of \eqref{1dim}, that we denote by $X$. Moreover, $\mu_{1}(\beta,s_0)$ has the sign of $\beta$, and integrating \eqref{1dim} we get % (by writing, here and after, for simplicity $\mu_{1}$)
\begin{equation}
\label{mon}
|X'(s)|^{p-2}X'(s)=-\mu_{1}(\beta,s_0) \int_{0}^{s}X^{p-1}dt.
\end{equation}
Moreover, by multiplying the first line of \eqref{1dim} for $X'$ and integrating from $0$ to $s$, we have
\[
\int_0^{s}(|X'|^{p-2}X')'X' dt+\mu_{1}(\beta,s_0) \int_{0}^{s}X^{p-1}X'dt=0
\]
and hence 
\begin{equation}\label{equa_0}
\frac{|X'(s)|^p}{p'}+\frac{\mu_{1}(\beta,s_{0})}{p} X(s)^p=\frac{\mu_{1}(\beta,s_{0})}{p} X(0)^p\qquad \text{in}\ (0,s_0).
\end{equation}

Now, let $\beta>0$. By \eqref{mon}, $X$ is decreasing; we fix $X(0)=1$. Therefore, from \eqref{equa_0}, we have
\begin{equation}\label{eq_diff}
X'=-\tilde\mu^\frac 1p(1-X^p)^\frac{1}{p},
\end{equation}
where $\tilde\mu= \frac{\mu_{1}(\beta,s_0)}{p-1} $. By integrating \eqref{eq_diff} from $0$ to $s$, we have
\begin{equation}
\label{arcsine}
\arccos_p\left(X(s)\right)=\ds\int_{X(s)}^{1}\frac{1}{(1- X^p)^\frac1p}dX=\tilde\mu^\frac 1ps
\end{equation}
Then the solution of problem \eqref{1dim} is
\begin{equation}
\label{sol_0}
X(s)=\cos_p\left(\tilde\mu^\frac{1}{p}s \right).
\end{equation}

Finally, from the boundary condition in \eqref{1dim}, we have $X'(s_{0})=-\beta ^\frac{1}{p-1}X(s_{0})$, that, by  considering \eqref{eq_diff} and \eqref{sol_0}, gives
\begin{equation*}
\frac{\tilde\mu}{\beta^{p'}}=\frac{1}{\cos_p^{-p}\left(\tilde\mu ^\frac 1p s_{0} \right)-1}.
\end{equation*}
 
When $\beta<0$, then $X>0$ is increasing and defined up to a multiplicative constant, we set the minimum $X(0)=1$. From \eqref{equa_0}, we have
\begin{equation}\label{eq_diffN}
X'=(-\tilde\mu)^{\frac 1p}(X^p-1)^\frac 1p.
\end{equation}
By integrating from $0$ to $s$, we have
\[
\arccosh_p\left(X(s)\right)=\ds\int_1^{X(s)}\frac{1}{(X^p- 1)^\frac 1p}dX=(-\tilde\mu)^\frac 1ps.
\]
Therefore, the solution of \eqref{1dim}, for $\beta<0$, is
\begin{equation}
\label{sol_0N}
X(s)=\cosh_p\left((-\tilde\mu)^\frac 1ps \right).
\end{equation}
Moreover, from the boundary condition in \eqref{1dim}, we have $X'({s_0})=|\beta|^\frac{1}{p-1}X({s_0})$ that, by  considering \eqref{eq_diffN} and \eqref{sol_0N}, gives
\begin{equation} 
\label{tildemuneg}
\frac{-\tilde\mu}{|\beta|^{p'}} =\frac{1}{1-\cosh_p^{-p}\left((-\tilde\mu)^\frac 1p s_{0} \right)}.
\end{equation}
We observe that, being the function $\cosh_{p}(t)$   strictly monotone increasing as $t>0$, and $\cosh_{p}(0)=1$, $\cosh_{p}(t)\to +\infty$ as $t\to +\infty$, then 
$\frac{x}{|\beta|^{p'}}\left[ 1-\cosh_p^{-p}\left(x^\frac 1p s_{0} \right)\right] =1$ has always a unique positive solution $x$.
\end{proof}

\begin{rem}\label{argument}
Since $X$ is decreasing when $\beta>0$, it holds that $0\leq X\leq 1$. Hence $0\leq \arccos_p\left(X(s)\right)\leq \frac{\pi_p}{2}$. Therefore, from \eqref{arcsine}, we have $0\leq \tilde\mu^\frac 1p s \leq\frac{\pi_p}{2}$, and then
\[
\mu_{1}(\beta,s_0) \leq (p-1)\left(\frac{\pi_p}{2s_{0}}\right)^{p}.
\] 

On the other hand, in the case $\beta<0$, then by \eqref{tildemuneg} it holds 
\[
\mu_{1}(\beta,s_0) \le -(p-1)|\beta|^{p'}.
\]
\end{rem}

\section{Comparison results}%\label{comparison_sec}
In this section we will prove a comparison result that will be used to get the main theorems. We remark that the two estimates stated below do not depend on the sign of $\beta$.%, but each one will be used to treat one singular case.
\begin{prop}
\label{usefullemma}
Let $\Omega$ be a bounded Lipschitz domain in $ \R^{N}$, $ N \ge 2$, $\beta\in\R$ and $\lambda_{1}(\beta,\Omega)$ be the first eigenvalue of \eqref{pb_Robin1}.

If $v$ is a nonnegative function satisfying
\begin{equation}
    \label{pb_diseq}
\begin{cases}
-\mathcal Q_p v \geq \mu v^{p-1} \quad &\text{in}\ \Omega\\
F(\nabla v)^{p-1} F_{\xi}(\nabla v)\cdot \nu_\Omega +\beta F(\nu_\Omega) v^{p-1}\ge 0 & \text{on}\ \partial\Omega,
\end{cases}
\end{equation}
in weak sense, that is
\begin{multline*}
\int_{\Omega}F(\nabla v)^{p-1}F_{\xi}(\nabla v) \cdot \nabla \varphi dx+\beta\int_{\Omega} F(\nu_\Omega) v^{p-1} d\cH^{N-1} \ge \mu \int_{\Omega} v^{p-1}\varphi dx, \\
\forall \varphi \in W^{1,p}(\Omega),\; \varphi \ge 0,
\end{multline*}
then
\[
\lambda_{1}(\beta,\Omega)\ge \mu.
\]

If %$\beta<0$ and 
$v$ is a nonnegative function satisfying
\begin{equation}
    \label{pb_diseqN}
\begin{cases}
-\mathcal Q_p v \leq \mu v^{p-1} \quad &\text{in}\ \Omega\\
F(\nabla v)^{p-1} F_{\xi}(\nabla v)\cdot \nu_\Omega +\beta F(\nu_\Omega) v^{p-1}\le 0 & \text{on}\ \partial\Omega,
\end{cases}
\end{equation}
in weak sense, that is
\begin{multline*}
\int_{\Omega}F(\nabla v)^{p-1}F_{\xi}(\nabla v) \cdot \nabla \varphi dx+\beta\int_{\Omega} F(\nu_\Omega) v^{p-1} d\cH^{N-1} \le \mu \int_{\Omega} v^{p-1}\varphi dx, \\
\forall \varphi \in W^{1,p}(\Omega),\; \varphi \ge 0,
\end{multline*}
Then
\[
\lambda_{1}(\beta,\Omega)\le \mu.
\]
\end{prop}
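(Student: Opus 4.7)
The strategy rests on a Finsler-Picone inequality, which I would establish first: for $u,v\in W^{1,p}(\Omega)$ with $u\ge 0$ and $v>0$, it holds almost everywhere that
\[
F(\nabla v)^{p-1}F_{\xi}(\nabla v)\cdot \nabla\Bigl(\tfrac{u^{p}}{v^{p-1}}\Bigr)\le F(\nabla u)^{p}.
\]
To prove it, expand $\nabla(u^p/v^{p-1}) = p(u/v)^{p-1}\nabla u - (p-1)(u/v)^p\nabla v$; invoke Euler's identity $F_{\xi}(\nabla v)\cdot\nabla v=F(\nabla v)$ on the second term, and the duality \eqref{prodscal} together with $F^{o}(F_{\xi}(\nabla v))=1$ on the first, to bound it by $p(u/v)^{p-1}F(\nabla u)F(\nabla v)^{p-1}$. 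A Young inequality $pAB^{p-1}\le A^{p}+(p-1)B^{p}$ with $A=F(\nabla u)$ and $B=(u/v)F(\nabla v)$ then yields the claim.

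For the first assertion, let $u>0$ be the first Robin eigenfunction associated with $\lambda_{1}(\beta,\Omega)$ and set $v_{\epsilon}=v+\epsilon$ for $\epsilon>0$. The function $\varphi=u^{p}/v_{\epsilon}^{p-1}$ lies in $W^{1,p}(\Omega)$ and is nonnegative, hence is admissible in the weak inequality \eqref{pb_diseq}. Applying the Finsler-Picone inequality to $u$ and $v_\epsilon$ (using $\nabla v_\epsilon=\nabla v$) to bound the gradient term produces
\[
\int_{\Omega}F(\nabla u)^{p}\,dx+\beta\int_{\partial\Omega}F(\nu_{\Omega})\frac{v^{p-1}}{v_{\epsilon}^{p-1}}u^{p}\,d\cH^{N-1}\ge \mu\int_{\Omega}\frac{v^{p-1}}{v_{\epsilon}^{p-1}}u^{p}\,dx.
\]
Since $v>0$ in $\overline\Omega$ by the strong minimum principle for nontrivial nonnegative supersolutions, dominated convergence as $\epsilon\to 0^{+}$ gives
\[
\int_{\Omega}F(\nabla u)^{p}\,dx+\beta\int_{\partial\Omega}F(\nu_{\Omega})u^{p}\,d\cH^{N-1}\ge \mu\int_{\Omega}u^{p}\,dx,
\]
and the left-hand side equals $\lambda_{1}(\beta,\Omega)\int_{\Omega}u^{p}\,dx$ by the Rayleigh characterization of the eigenvalue, proving $\lambda_{1}(\beta,\Omega)\ge \mu$.

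For the second assertion the roles are swapped. Test the eigenvalue equation for $u$ with $\varphi=v_{\epsilon}^{p}/u^{p-1}$, which is admissible because $u>0$ on $\overline\Omega$ in the Robin setting, and apply the Finsler-Picone inequality with $u$ and $v_\epsilon$ in the opposite roles. After letting $\epsilon\to 0^{+}$, one obtains
\[
\lambda_{1}(\beta,\Omega)\int_{\Omega}v^{p}\,dx\le \int_{\Omega}F(\nabla v)^{p}\,dx+\beta\int_{\partial\Omega}F(\nu_{\Omega})v^{p}\,d\cH^{N-1}.
\]
On the other hand, testing \eqref{pb_diseqN} directly with $\varphi=v$ and using Euler's identity gives the reverse bound with $\mu\int_\Omega v^{p}\,dx$ on the right. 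Chaining the two estimates and cancelling $\int_\Omega v^{p}\,dx>0$ yields $\lambda_{1}(\beta,\Omega)\le \mu$. The main technical hurdles are the verification of the Finsler Picone inequality and the dominated-convergence passage as $\epsilon\to 0^{+}$; both rely crucially on the strict positivity of $v$ in $\overline\Omega$, which in turn follows from the strong minimum principle applied to the (tacitly nontrivial) sub- or supersolution $v$.
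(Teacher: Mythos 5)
Your Finsler--Picone inequality is exactly the convexity inequality for $F^p$ that the paper invokes after subtracting \eqref{weak_v} from \eqref{weak_u} (and \eqref{weak_uN} from \eqref{weak_vN}); once that is recognized, the two proofs of the first assertion are the same computation, and the dominated- vs.\ monotone-convergence passage is immaterial. In the second assertion you diverge slightly by choosing the test function $\varphi=v_{\varepsilon}^{p}/u^{p-1}$, which requires $u$ to be bounded away from zero up to $\overline\Omega$; this holds for the $C^{2}$ domains that appear in the applications but is not immediate in the Lipschitz generality of the statement, and the paper sidesteps the issue by putting the regularization in the denominator, i.e.\ testing with $\varphi=v^{p}/(u+\varepsilon)^{p-1}$, which is admissible with no positive lower bound on $u$.
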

 \begin{proof}
Let us consider a positive first eigenfunction $u$ of \eqref{pb_Robin1}. Hence
\begin{equation}
\label{weak_u}
\int_\Omega F(\nabla u)^{p} dx + \beta \int_{\partial\Omega} u^{p}F(\nu_\Omega)d\cH^{N-1}= \lambda_{1}(\beta,\Omega) \int_{\Omega}u^{p}dx
\end{equation}
while, choosing as test function $\varphi =\frac{u^p}{(v+\varepsilon)^{p-1}}$ in \eqref{pb_diseq} we get
 \begin{equation}
     \label{weak_v}
\begin{split}
\int_\Omega p\left(\frac{u}{v+\varepsilon}\right)^{p-1} F(\nabla v)^{p-1}  F_{\xi} (\nabla v) \cdot \nabla u\, dx - (p-1)\int_\Omega\left(\frac{u}{v+\varepsilon}\right)^p F(\nabla v)^pdx 
\\
+\beta\int_{\partial\Omega}  \frac{v^{p-1}}{(v+\varepsilon)^{p-1}} u^{p}F(\nu_\Omega)d\cH^{N-1}
\ge
\mu\int_\Omega\frac{v^{p-1}}{(v+\varepsilon)^{p-1}}
u^{p} dx
\end{split}
 \end{equation}
Hence by subtracting \eqref{weak_v} to \eqref{weak_u}, we get
	\begin{multline*}
	\int_{\Omega} \left\{[ F(\nabla u)]^{p} - p 
	F\left(\frac{u}{v+\eps}\nabla v\right)^{p-1}
	F_{\xi}\left(\frac{u}{v+\eps}\nabla v\right)  \cdot \nabla u   +\right.\\ \left. + (p-1) F\Big( \frac{u}{v+\eps}  \nabla v\Big)^{p} 
	 \right\}dx \le  
	 \int_{\Omega} \left[\lambda_{1}(\beta,\Omega)-\frac{v^{p-1}}{(v+\eps)^{{p-1}}}\mu\right] u^{p}\, dx.
	\end{multline*}
	From the convexity of $F^{p}$, we get that the left-hand side in the above inequality is nonnegative. Hence, as $\eps \rightarrow 0$, the monotone convergence gives that
	\[
	0\le (\lambda_{1}(\beta,\Omega)-\mu) \int_{\Omega} u^{p}\, dx,
	\]
	 and the first inequality follows.

As regards the second inequality, the proof follows in a similar way: by using $\frac{v^p}{(u+\varepsilon)^{p-1}}$ and $v$ as test functions in \eqref{pb_Robin1} and \eqref{pb_diseqN} respectively, we get
 \begin{equation}
 \label{weak_uN}
\begin{split}
\int_\Omega p\left(\frac{v}{u+\varepsilon}\right)^{p-1} F(\nabla u)^{p-1}  F_{\xi} (\nabla u) \cdot \nabla v\, dx - (p-1)\int_\Omega\left(\frac{v}{u+\varepsilon}\right)^p F(\nabla u)^pdx\\
+\beta\int_{\partial\Omega}  \frac{u^{p-1}}{(u+\varepsilon)^{p-1}} v^{p}F(\nu_\Omega)d\cH^{N-1}=\lambda_1(\beta,\Omega)\int_\Omega\frac{u^{p-1}}{(u+\varepsilon)^{p-1}}v^{p} dx.
\end{split}
 \end{equation}
 and
\begin{equation}
\label{weak_vN}
\int_\Omega F(\nabla v)^{p} dx + \beta \int_{\partial\Omega} v^{p}F(\nu_\Omega)d\cH^{N-1}\le \mu \int_{\Omega}v^{p}dx;
\end{equation}
hence by subtracting \eqref{weak_uN} to \eqref{weak_vN}  we get 
\begin{multline*}
\int_{\Omega} \left\{[ F(\nabla v)]^{p} - pF\left(\frac{v}{u+\eps}\nabla u\right)^{p-1}F_{\xi}\left(\frac{v}{u+\eps}\nabla u\right)  \cdot \nabla v   +\right.\\ 
\left. + (p-1) F\Big( \frac{v}{u+\eps}  \nabla u\Big)^{p} \right\}dx \le  
	 \int_{\Omega} \left[\mu-\frac{u^{p-1}}{(u+\eps)^{{p-1}}}\lambda_{1}(\beta,\Omega)\right] v^{p}\, dx.
	\end{multline*}
and passing to the limit as $\eps\to 0$ 
	\[
	(\mu-\lambda_{1}(\beta,\Omega)) \int_{\Omega} v^{p}\, dx\geq 0,
	\]
	 and the conclusion follows.
\end{proof}

\section{Proof of the main results}\label{main_sec}
In this section we give the proof of the main results of the paper. 
\begin{proof}[Proof of Theorem \ref{main1}] Let us consider the anisotropic Dirichlet $p$-torsional rigidity problem \eqref{pbtord} and let us denote by $M$ the maximum achieved by the anisotropic stress function $w_{\Omega}$, and fix $s_0=(p'M)^{\frac{1}{p'}}$. The first eigenfunction $X=X(s)$ of problem \eqref{1dim} with $\mu=\mu_{1}(\beta,s_0)$ has a sign, then we can assume that $X>0$ (see also the discussion in Section \ref{1D_subsec}). We set
\[
s(x):=\left[p'(M-w_{\Omega}(x))\right]^{\frac{1}{p'}}\qquad\text{in}\ \Omega,
 \]
and let us observe that $s(x)=s_{0}$ if $x\in\de\Omega$.
We denote
\[
v(x):=X(s(x))\qquad\text{in}\ \Omega.
\]
Therefore, we have $\nabla v(x)=X'(s(x))\nabla s(x)$, and
\[
\nabla s=-\frac{\nabla w_{\Omega}}{s^{p'-1}}\qquad\text{in}\ \Omega.
\]
We first show that the function $v$ satisfies
\begin{equation*}
%    \label{pb_diseq}
\begin{cases}
-\mathcal Q_p v \geq \mu_{1}(\beta,s_0) v^{p-1} \quad &\text{in}\ \Omega\\
F(\nabla v)^{p-1} F_{\xi}(\nabla v)\cdot \nu_\Omega +\beta F(\nu_\Omega) v^{p-1}\ge 0 & \text{on}\ \partial\Omega.
\end{cases}
\end{equation*}
Indeed, the computation of the anisotropic $p$-Laplacian of $v$ gives:
\[
\begin{split}
\mathcal Q_{p}v&=\dive ( F^{p-1}(\nabla v) F_{\xi}(\nabla v))= \dive (|X'|^{p-2}X'F^{p-1}(\nabla s)F_\xi(\nabla s))\\
&=|X'|^{p-2}X'\dive (F^{p-1}(\nabla s)F_\xi(\nabla s))+ (|X'|^{p-2}X')'F^{p}(\nabla s)\\
&=|X'|^{p-2}X'\dive \left(-\frac{F^{p-1}(\nabla w_{\Omega})F_{\xi}(\nabla w_{\Omega})}{s}\right)+ (|X'|^{p-2}X')'\frac{F^{p}(\nabla w_{\Omega})}{s^{p'}}\\
&=|X'|^{p-2}X' \left(-\frac{\mathcal Q_p w_{\Omega}}s+\frac{F_\xi(\nabla w_{\Omega})F^{p-1}(\nabla w_{\Omega})\nabla s}{s^2}\right)-\mu_{1}(\beta,s_0)X^{p-1}\frac{F^{p}(\nabla w_{\Omega})}{s^{p'}}\\
&=|X'|^{p-2}X' \left(\frac{1}s-\frac{F^{p}(\nabla w_{\Omega})}{s^{p'+1}}\right)-\mu_{1}(\beta,s_0) X^{p-1}\frac{F^{p}(\nabla w_{\Omega})}{s^{p'}}.
\end{split}
\]

Hence, we have the following:
\begin{equation}
\label{eq111}
\begin{split}
\mathcal Q_{p}v+\mu_{1} v^{p-1}&=\frac{|X'|^{p-2}X'}s \left(1-\frac{F^{p}(\nabla w_{\Omega})}{s^{p'}}\right)+\mu_{1}(\beta,s_0) X^{p-1}\left(1-\frac{F^{p}(\nabla w_{\Omega})}{s^{p'}}\right)\\
&=\left(\frac{|X'|^{p-2}X'}s+\mu_{1}(\beta,s_0) X^{p-1}\right)\left(1-\frac{F^{p}(\nabla w_{\Omega})}{s^{p'}}\right).
\end{split}
\end{equation}
We claim that
\[
\mathcal Q_{p}v+\mu_{1}(\beta,s_0) v^{p-1} \le 0.
\]
Indeed, the $\mathcal P$-function method assures that the term $1-\frac{F^{p}(\nabla w_{\Omega})}{s^{p'}}$ in \eqref{eq111} is nonnegative (see Remark \ref{rem_P_f});  hence we have to study the sign of
\begin{equation*}
g(s):=|X'|^{p-2}X'+\mu_{1}  \,s\, X^{p-1}.
\end{equation*}
Since $X$ is decreasing, by deriving $g$ with respect to $s$,  we have
\[
g'(s)=\mu_{1}(\beta,s_0) s (p-1) |X|^{p-2}X'\le 0,
\]
since $\mu_{1}(\beta,s_0)>0$ and $X$ is decreasing. Being $g(0)=0$, the claim is proved.

It remains to verify the boundary condition. Therefore on $\partial\Omega$, we have
\[
\begin{split}
F(\nabla v)^{p-1} F_\xi(\nabla v)\cdot \nu_\Omega &+\beta |v|^{p-2}v F(\nu_\Omega) \\ &=
-(-X'(s_{0}))^{p-1} F(\nabla s)^{p-1} F_\xi(\nabla s)\cdot \nu_\Omega +\beta X(s_{0})^{p-1}F(\nu_\Omega) \\
&=(-X'(s_{0}))^{p-1}\frac{F(\nabla w_\Omega)^{p-1} F_\xi(\nabla  w_\Omega)\cdot \nu_\Omega}{s_0} +\beta X(s_{0})^{p-1}F(\nu_\Omega) \\
&=(-X'(s_{0}))^{p-1}\frac{F(\nabla w_\Omega)^{p-1} F_\xi(\nabla  w_\Omega)\cdot \nu_\Omega}{s_0} +\beta X(s_{0})^{p-1}F(\nu_\Omega)\\
&=\beta X(s_{0})^{p-1}\left(F(\nu_\Omega)+\frac{F(\nabla w_\Omega)^{p-1} F_\xi(\nabla  w_\Omega)\cdot \nu_\Omega }{s_0}\right)\\
&=\beta X(s_{0})^{p-1}\frac{F(\nabla w_{\Omega})}{\left|\nabla w_{\Omega}\right|}\left(1-\frac{F(\nabla w_\Omega)^{p-1}}{s_0}\right).
\end{split}
\]
The final equality holds being $\nu_\Omega=-\frac{\nabla w_{\Omega}}{\left|w_{\Omega}\right|}$ on $\de\Omega$ and by the homogeneity of $F$, while the inequality is again a consequence of  \eqref{pfunctss}. 
The proof of \eqref{sperbine} is then completed by applying Lemma \ref{usefullemma}.
%Finally, since $X$ solves the same problem studied in Theorem \ref{mu_thm} when $s_0=(p'M)^{\frac{1}{p'}}$, then \eqref{et} holds and this achieves the conclusion. 
\end{proof}

\begin{proof}[Proof of Corollary \ref{cor_H}]
On one hand, we know from \eqref{eq_diff} that $X'(s_0)=-\tilde\mu^\frac 1p(1-X(s_0)^p)^{{\frac1p}}$, where $\tilde\mu=\frac{\mu_{1}(\beta,s_0)}{p-1}$; on the other hand, from the boundary condition of problem \eqref{1dim} in $s_0$, we have $X'(s_0)=-\beta^\frac{1}{p-1}X(s_0)$. Hence, this two equalities gives
\begin{equation}
\label{two_equalities}
X(s_0)=\left(\frac{\tilde\mu}{\tilde\mu+\beta^{p'}}\right)^\frac1p.
\end{equation}
By substituting the solution \eqref{sol_0} of problem \eqref{1dim} in \eqref{two_equalities}, we have
\[
\cos_p\left(s_0\tilde\mu^\frac 1p \right)=\left(\frac{\tilde\mu}{\tilde\mu+\beta^{p'}}\right)^\frac1p,
\]
and hence
\begin{equation*}
s_0\tilde\mu^\frac 1p=\arccos_{p}\left[ \left(\frac{\tilde\mu}{\tilde\mu+\beta^{p'}}\right)^\frac1p\right],
\end{equation*}
By recalling (\cite[Th. 1]{BV}) that
\[
\frac{\pi_{p}}{2}-\arccos_{p} x< \frac{\pi_{p}}{2}x,\quad x\in]0,1[,
\]
we have
\[
\frac{\pi_p}{2}-s_0\tilde\mu^\frac 1p\le \frac{\pi_p}{2}  \frac{\tilde\mu^\frac 1p}{\beta^\frac 1{p-1}}.
\]
By substituting $s_0=(p'M(\Omega))^\frac{1}{p'}$ in the latter inequality and then using that, by Theorem \ref{paynemax}, $p' M(\Omega)\leq R_F(\Omega)^{p'}$, it holds that
\[
\mu(\beta,s_{0}) \ge (p-1)\left( \frac{\pi_p}{2}\right)^p\frac{1}{\left(R_F(\Omega)+\frac{\pi_p}{2}\beta^{-\frac1{p-1}}\right)^p}.
\]
Hence by \eqref{sperbine} we get the conclusion.
\end{proof}

\begin{proof}[Proof of Theorem \ref{main2}] 
Let be $\beta<0$, and consider the positive solution $X$ of \eqref{1dim} corresponding to $\mu_{1}(\beta,s_0)$ given by Theorem \ref{mu_thm}, with $s_{0}=R_{F}(\Omega)$. 
Let us set
\[
z(t):=X(s_{0}-t),\quad t\in[0,s_{0}],\qquad v(x):=z(d_{F}(x))\quad\text{in}\ \Omega.
 \]
%We have that $\nabla v(x)=X'(d_F(x))\nabla d_F(x)$. 
%\[
%\begin{split}
%\int_\Omega\mathcal Q_{p}v\varphi dx&=-\int_\Omega\nabla v\nabla\varphi dx=\int_\Omega\dive ( F^{p-1}(\nabla v) F_{\xi}(\nabla v))\varphi dx\\
%&=\int_\Omega \dive (|X'|^{p-2}X'F^{p-1}(\nabla d_F)F_\xi(\nabla d_F))\varphi dx\\
%&=\int_\Omega\left(|X'|^{p-2}X'\dive (F^{p-1}(\nabla d_F)F_\xi(\nabla d_F))+ (|X'|^{p-2}X')'F^{p}(\nabla d_F)\right)\varphi dx\\
%&=\int_\Omega\left(|X'|^{p-2}X'\dive (F_\xi(\nabla d_F))+ (|X'|^{p-2}X')'\right)\varphi dx\\
%&=\int_\Omega\left(|X'|^{p-2}X' \mathcal Q_2(\nabla d_F)-\mu X^{p-1}\right)\varphi dx.\\
%\end{split}
%\]
We claim that
\begin{equation}
\label{claimbetaneg}
\begin{cases}
 -\mathcal Q_{p}v \le \mu_{1}(\beta,s_0) v^{p-1}&\text{in }\Omega\\
 F(\nabla v)^{p-1} F_\xi(\nabla v)\cdot \nu_\Omega +\beta  F(\nu_\Omega) v^{p-1}\le 0&\text{on }\de \Omega
\end{cases}
\end{equation}
in weak sense. To prove the claim, we first observe that $z$ is a decreasing function, and recalling that $F(\nabla d_{F}(x))=1$, we have that
\begin{multline*}
\int_\Omega F^{p-1}(\nabla v) F_{\xi}(\nabla v)\cdot\nabla\varphi \,dx+\beta\int_{\partial\Omega} v^{p-1} F(\nu_\Omega) \varphi\, d\mathcal H^{N-1}-\mu_{1}(\beta,s_0) \int_\Omega v^{p-1}\varphi \,dx  \\
=-\int_\Omega[-z'(d_{F})]^{p-1}F_\xi(\nabla d_F)\cdot \nabla \varphi \, dx\\+\beta\int_{\partial\Omega} v^{p-1} F(\nu_\Omega) \varphi \, d\mathcal H^{N-1}- \mu_{1}(\beta,s_0)\int_\Omega v^{p-1}\varphi \,dx,
\end{multline*}
for any function $\varphi\in W^{1,p}(\Omega)$, $\varphi\ge0$ in $\Omega$. The function $[-z']^{p-1}F_\xi(\nabla d_F)\cdot \nabla \varphi$, in particular, is in $L^{1}(\Omega)$. Moreover, if $x=\Phi(y,t)$, $x\in \Omega$, then $t=F^{o}(x-y)=d_{F}(x)$. Hence, by Theorem \ref{change_normal} and \eqref{dentro_fuori}, it holds that
\begin{align*}
&\int_\Omega[-z'(d_{F}(x))]^{p-1}F_\xi(\nabla d_F(x))\cdot \nabla \varphi(x) \, dx \\ &=\int_{\partial\Omega} F(\nu_\Omega(y))\int_0^{\ell(y)}[-z'(t)]^{p-1}F_{\xi}(\nabla d_F(\Phi(y,t)))\cdot\nabla\varphi(\Phi(y,t))J(y,t)dt \,d\mathcal H^{N-1}(y) \\
&= \int_{\partial\Omega} F(\nu_\Omega(y))\int_0^{\ell(y)}[-z'(t)]^{p-1} \frac{d}{dt}\left[\varphi(\Phi(y,t))\right]J(y,t)\, dt \, d\mathcal H^{N-1}(y)
\end{align*}
Hence, integrating by parts and recalling that $\varphi(\Phi(y,0))=\varphi(y)$ and $J(y,0)=1$ one gets
\begin{align*}
&\int_0^{\ell(y)}[-z'(t)]^{p-1} \frac{d}{dt}\left[\varphi(\Phi(y,t))\right]J(y,t)\, dt \\[.2cm]
&=[-z'(\ell(y))]^{p-1}\varphi(\Phi(y,\ell(y)))J(y,\ell(y)) -
[-z'(0)]^{p-1}\varphi(y)  \\[.2cm]
&\;-\int_0^{\ell (y)}[-z'(t)]^{p-1}\varphi(\Phi(y,t))\frac{d}{dt}\left[J(y,t)\right]dt 
- \int_0^{\ell (y)}([-z'(t)]^{p-1})'\varphi(\Phi(y,t))J(y,t)dt \\[.2cm]
&\ge\beta z(0)^{p-1} \varphi(y) + \mu_{1}(\beta,s_{0}) \int_0^{\ell (y)}z(t)^{p-1}\varphi(\Phi(y,t))J(y,t)dt 
\end{align*}
where the inequality holds by the fact that $J>0$ and \eqref{laplaciano_cambio}. Then, by using again Theorem \ref{change_normal}, we have
\begin{multline*}
 \int_{\partial\Omega} F(\nu_\Omega(y))\int_0^{\ell(y)}[-z'(t)]^{p-1} \frac{d}{dt}\left[\varphi(\Phi(y,t))\right]J(y,t)\, dt \, d\mathcal H^{N-1}(y) \ge \\
\beta\int_{\de\Omega}F(\nu_{\Omega}) v^{p-1}\varphi\, d\mathcal H^{N-1} + \mu_{1}(\beta,s_{0})\int_{\Omega}v^{p-1}\varphi\, dx,
\end{multline*}
and so the claim \eqref{claimbetaneg} is proved. The proof of \eqref{sperbineN} is then completed by applying Lemma \ref{usefullemma}.% and by observing that $X$ solves the same problem studied in Theorem \ref{mu_thm} when $s_0=R_F(\Omega)$. 
 \end{proof}
 \begin{proof}[Proof of Corollary \ref{corestN}]
It holds immediately from \eqref{sperbineN} and Remark \ref{argument}.
\end{proof}

In last part of this section we prove the optimality of the inequalities \eqref{sperbine} and \eqref{sperbineN} proved in Theorem \ref{main1}. \begin{prop} Let $\Omega_\ell=]-\frac a2,\frac a2[\times]-\frac \ell 2,\frac \ell 2[^{N-1}$, and suppose $R_{F}(\Omega_{\ell})=\frac a2 F^{o}(e_{1})$. 

If $\beta>0$, then
\[
\lim_{\ell\to +\infty}\frac{\lambda_1(\beta,\Omega_\ell)}{\mu_\ell}=1
\]
where $\mu_{\ell}$ is the first eigenvalue of \eqref{1dim} in $(0,s_{\ell})$, with $s_{\ell}=(M_{\ell}p')^{\frac{1}{p'}}$, $M_\ell=\max_{\Omega_\ell}w_{\Omega_\ell}$ and $w_{\Omega_\ell}$ the solution of problem \eqref{pbtord} in $\Omega_\ell$.

If $\beta<0$, then
\[
\lim_{\ell\to +\infty}\frac{\lambda_1(\beta,\Omega_\ell)}{\mu_\ell}=1
\]
where $\mu_{\ell}$ is the first eigenvalue of \eqref{1dim} in $(0,R_F(\Omega_{\ell}))$.
\end{prop}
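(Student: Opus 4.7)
The strategy will be to sandwich $\lambda_1(\beta,\Omega_\ell)$ between the one-sided bound supplied by Theorem~\ref{main1} (resp.\ Theorem~\ref{main2}) and a Rayleigh-quotient upper bound obtained from a one-dimensional test function, and then pass to the limit. The first task is a structural observation: for the slab $\Omega_\ell$ with $\ell$ large one has $R_F(\Omega_\ell)=a/(2F(e_1))$, because the largest inscribed Wulff shape has $e_1$-extent $\max_{F^o(w)\le 1}|w_1|=F(e_1)$; hence the standing hypothesis $R_F(\Omega_\ell)=(a/2)F^o(e_1)$ is equivalent to the identity $F(e_1)F^o(e_1)=1$. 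This in turn yields (i) the pointwise dual bound $F(\xi)\ge F(e_1)|\xi_1|$ on $\R^N$, and (ii) the alignment $F_\xi(e_1)=F(e_1)e_1$, via $F_\xi(e_1)\cdot e_1=F(e_1)$, $F^o(F_\xi(e_1))=1$, and the strict convexity of $\{F^o=1\}$ coming from \eqref{strong}. Set $s_*:=(a/2)F^o(e_1)=(a/2)/F(e_1)$ and $\mu_*:=\mu(\beta,s_*)$; the plan is to show $\lambda_1(\beta,\Omega_\ell)\to\mu_*$, and then to conclude since $\mu_\ell\to\mu_*$ in both cases.

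For the upper bound I would introduce the 1D test function $v_*(x):=X(|x_1|F^o(e_1))$, where $X$ is the positive first eigenfunction of \eqref{1dim} on $(0,s_*)$. The alignment (ii) together with $F(e_1)F^o(e_1)=1$ gives $-\mathcal{Q}_p v_*=\mu_* v_*^{p-1}$ throughout $\Omega_\ell$ and the Robin condition on the caps $\{x_1=\pm a/2\}$. Because $v_*$ depends only on $x_1$, its Rayleigh quotient on $\Omega_\ell$ factorizes, yielding by direct computation
\[
R_\ell(v_*)=\mu_*+\frac{2\beta}{\ell}\sum_{i=2}^N F(e_i),
\]
the extra term reflecting the lateral faces where $v_*$ fails the Robin condition. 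Consequently $\lambda_1(\beta,\Omega_\ell)\le\mu_*+C/\ell$ for some $C=C(\beta,F)$.

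In the case $\beta>0$, Theorem~\ref{main1} supplies the matching lower bound $\lambda_1(\beta,\Omega_\ell)\ge\mu_\ell=\mu(\beta,s_\ell)$ with $s_\ell=(p' M_\ell)^{1/p'}$. It remains to prove $M_\ell\to M_\infty:=(a/2)^{p'}/(p'F(e_1)^{p'})$, which will imply $s_\ell\to s_*$ and hence $\mu_\ell\to\mu_*$ by continuous dependence of the implicit root in Theorem~\ref{mu_thm}. The inequality $M_\ell\le M_\infty$ follows by applying the comparison principle between $w_{\Omega_\ell}$ and the explicit 1D torsion solution $w_\infty(x_1)=F(e_1)^{-p'}[(a/2)^{p'}-|x_1|^{p'}]/p'$, which is nonnegative on $\partial\Omega_\ell$; the reverse $M_\ell\ge M_\infty-o(1)$ follows from uniform $L^\infty$-bounds and local elliptic regularity, extracting a subsequential $C^1_{\mathrm{loc}}$-limit of $w_{\Omega_\ell}$ that is a bounded solution of the torsion equation on $\Omega_\infty$ and hence equal to $w_\infty$ by uniqueness. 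The sandwich $\mu_\ell\le\lambda_1(\beta,\Omega_\ell)\le\mu_*+C/\ell$ combined with $\mu_\ell\to\mu_*$ closes this case.

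In the case $\beta<0$, one has $\mu_\ell=\mu(\beta,R_F(\Omega_\ell))=\mu_*$ constant in $\ell$, and Theorem~\ref{main2} gives $\lambda_1(\beta,\Omega_\ell)\le\mu_*$, so $\limsup\lambda_1/\mu_\ell\le 1$. For the converse $\liminf\ge 1$, I would run a slicing argument: the dual bound $F(\nabla u)^p\ge F(e_1)^p|\partial_1 u|^p$ combined with the 1D Robin Rayleigh inequality applied to each transverse slice $u(\cdot,x')$ on $(-a/2,a/2)$ (whose rescaled form matches \eqref{1dim} on $(0,s_*)$ with eigenvalue $\mu_*$), integrated in $x'$, yields
\[
\lambda_1(\beta,\Omega_\ell)\ge \mu_*-|\beta|\,\frac{\int_{\mathrm{lat}(\Omega_\ell)}u_\ell^p F(\nu)\,d\mathcal{H}^{N-1}}{\int_{\Omega_\ell}u_\ell^p\,dx},
\]
where $u_\ell$ is a normalized first eigenfunction and $\mathrm{lat}(\Omega_\ell)$ denotes the union of lateral faces. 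The hard part will be showing that the lateral-boundary ratio on the right vanishes as $\ell\to\infty$: the plan is to combine the uniform $W^{1,p}$-bound on $u_\ell$ (which follows from $\lambda_1\le\mu_*$, the coercivity \eqref{eq:lin}, and the Grisvard interpolation \eqref{grisvard_est}) with a face-by-face trace estimate via the fundamental theorem of calculus in each lateral direction, the key quantitative point being that the $L^p$-mass of $u_\ell$ spreads throughout $\Omega_\ell$ as $\ell\to\infty$ so the surface-to-volume contribution decays like $O(1/\ell)$. The combined sandwich $\mu_*-o(1)\le\lambda_1(\beta,\Omega_\ell)\le\mu_*$ then finishes the proof.
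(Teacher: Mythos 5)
Your strategy for $\beta>0$ is essentially the paper's: sandwich $\lambda_1$ between the lower bound $\mu_\ell$ from Theorem~\ref{main1} and a Rayleigh quotient of a one-dimensional test function, and show the two ends coalesce. The paper uses the $\ell$-dependent test function built from $X_\ell$ on $(0,s_\ell)$ (extended by a constant on $B_\ell$), whereas you use the $\ell$-independent profile $X$ on $(0,s_*)$; both need $s_\ell\to s_*$, which the paper takes from the optimality statement in Theorem~\ref{paynemax} (cited to \cite{DPGGLB}), while you try to prove it by comparison plus $C^1_{\mathrm{loc}}$ compactness. Your compactness step has a genuine gap: the uniqueness of the bounded solution of $-\mathcal Q_p w=1$, $w=0$ on $\partial$, on the \emph{unbounded} slab $]-a/2,a/2[\times\R^{N-1}$ is not automatic for the Finsler $p$-Laplacian and needs a Phragm\'en--Lindel\"of type argument; without it you cannot identify the subsequential limit with $w_\infty$. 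Modulo this, the $\beta>0$ case is fine and takes the same route as the paper.

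The $\beta<0$ case is where the real trouble is. First, a sign bookkeeping slip: since $\lambda_1,\mu_*<0$, the inequality $\lambda_1\le\mu_*$ from Theorem~\ref{main2} gives $\liminf \lambda_1/\mu_\ell\ge1$ (not $\limsup\le1$), and the slicing bound $\lambda_1\ge\mu_*-|\beta|\,\cdot(\text{lateral ratio})$ would give the $\limsup\le1$; you have the two directions swapped, though the two ingredients are the right ones. The fatal issue is the central claim of the slicing step, that
\[
\frac{\int_{\mathrm{lat}(\Omega_\ell)}u_\ell^pF(\nu)\,d\mathcal H^{N-1}}{\int_{\Omega_\ell}u_\ell^p\,dx}\longrightarrow 0.
\]
For $\beta<0$ this is false: the eigenfunction develops a boundary layer of thickness $O(|\beta|^{-1})$ along \emph{every} face, including the lateral ones, so the surface mass is comparable to the bulk mass and the ratio stays bounded away from $0$. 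You can see this explicitly in the Euclidean, $p=2$, $N=2$ case: the Robin Laplacian on the rectangle $]-a/2,a/2[\times]-\ell/2,\ell/2[$ separates, so
\[
\lambda_1(\beta,\Omega_\ell)=\mu_1\!\left(\beta,\tfrac a2\right)+\mu_1\!\left(\beta,\tfrac \ell2\right)\longrightarrow \mu_*-\beta^2\quad(\ell\to\infty),
\]
since $\mu_1(\beta,s_0)\to-(p-1)|\beta|^{p'}=-\beta^2$ as $s_0\to\infty$; the limit is strictly below $\mu_*$, so $\lambda_1/\mu_\ell\to 1-\beta^2/\mu_*>1$. Thus no slicing argument of the kind you propose can close the gap, because the desired limit does not hold; a stronger lower bound of the form $\lambda_1\ge\mu_*-o(1)$ is simply not available for fixed $a$. (Note this also means the paper's one-line remark that the $\beta<0$ case ``analogously follows'' is not a proof: the test function $\varphi_\ell$ only yields an \emph{upper} bound on $\lambda_1$, which for $\beta<0$ is the same direction as Theorem~\ref{main2} and gives no sandwich. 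You were right to notice that a genuine lower bound is needed; the difficulty is that, as the computation above shows, the lower bound you want does not hold unless one also sends $a\to0$.)
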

\begin{proof}
Let us first consider the case when $\beta>0$. Being the inradius $R_{F}(\Omega)=\frac a 2 F^{o}(e_{1})$, it holds by Theorem \ref{paynemax} that 
\[
s_\ell=(p'M_\ell)^\frac{1}{p'}\leq \frac a2 F^{o}(e_{1})
\] 
and that $s_\ell\to \tilde a:=\frac a2 F^{o}(e_{1})$, $\mu_{\ell}\to \mu_{\tilde a}$ as $\ell\to+\infty$.

We set $\Omega_\ell=A_\ell \cup B_\ell$ where $A_\ell=]-\frac {s_\ell}{F^{o}(e_{1})},\frac {s_\ell}{F^{o}(e_{1})}[\times]-\frac \ell 2,\frac \ell 2[^{N-1}$ and $B_\ell=\Omega_{\ell}\setminus A_{\ell}$. 

We write $x=(x_1,x_2,...,x_{N-1},x_N)$ and consider the following function
\[
\varphi_\ell(x)=
\begin{cases}
X_{\ell}(|x_{1}|F^{o}(e_{1})) & \text{if }x\in A_\ell,\\
X_{\ell}(s_{\ell})& \text{if }x\in B_\ell,\\
\end{cases}
\]
where 
\[
X_{\ell}(s)=\cos_p\left(\left(\frac{\mu_\ell}{p-1}\right)^\frac{1}{p} s \right),\quad s\in [0,s_{\ell}].
\]
Therefore, by using \eqref{sperbine} and estimating $\lambda_1(\beta,\Omega_\ell)$ with the test function $\varphi_\ell$, we have
\[
\begin{split}
\mu_{\ell}\le\lambda_1(p,\Omega_\ell)&\leq\frac{\displaystyle\int_{\Omega_\ell} F(\nabla \varphi_\ell)^p dx +\beta \ds\int_{\de\Omega_\ell}|\varphi_\ell|^p d\cH^{N-1} }{\displaystyle\int_{\Omega_\ell}|\varphi_\ell|^p dx}.
\end{split}
\]
By straightforward computations, recalling that $F^{o}(e_{1})F(e_{1})=1$ (see \cite[Prop 4.4]{DPGGLB}), it holds that
\[
\frac{1}{\ell^{N-1}}\int_{\Omega_{\ell}}F(\nabla\varphi_{\ell})^{p}dx= \frac{2}{F^{o}(e_{1})} \int_{0}^{s_{\ell}}X_{\ell}'(s)^{p}ds,
\]
\begin{equation*}
\frac{1}{\ell^{N-1}}\int_{\de\Omega_{\ell}}\varphi_{\ell}^{p}F(\nu)dx=\frac{2}{F^{o}(e_{1})} X_{\ell}^{p}(s_{\ell})+O\left(\frac1\ell\right)
\end{equation*}
and
\[
\frac{1}{\ell^{N-1}}\int_{\Omega_{\ell}}\varphi^{p}dx = \frac{2}{F^{o}(e_{1})}\int_{0}^{s_{\ell}}X_{\ell}(s)^{p}ds + 4\left(\frac a2-\frac{s_{\ell}}{F^{o}(e_{1})}\right)X_{\ell}^{p}(s_{\ell}).
\]
Hence
\[
\lim_{\ell\to+\infty}\frac{\lambda_{1}(\beta,\Omega_{\ell})}{\mu_{\ell}}=1.
\]
The case $\beta<0$ analogously follows by considering the function
\[
\varphi_\ell(x)=
X_{\ell}(|x_{1}|F^{o}(e_{1})),\quad x\in \Omega_\ell,
\]
where
\[
X_{\ell}(s)=\cosh_p\left(\left(\frac{-\mu_\ell}{p-1}\right)^\frac{1}{p} s \right),\quad s\in [0,R_F(\Omega_{\ell})].
\]
\end{proof}

\begin{rem}
The inequality in Corollary \ref{corestN} has been obtained, in the Euclidean case, also in \cite[$p=2$, Theorem 2.3]{gs} or \cite[$1<p<+\infty$, Proposition 6.2]{KP} for any bounded Lipschitz domain, by applying a simple method based on the use of an exponential test function. It seems that the same method, in our setting, gives only a weaker form. Indeed, it is not difficult to show that plugging $\psi(x)=e^{\alpha x_i}$ in \eqref{eigint}, with $\alpha= \left[\frac{-\beta}{F^{o}(e_{i})F(e_{i})^{p}}\right]^{\frac{1}{p-1}}$, it holds that
\begin{equation}
\label{corestNweak}
\lambda_{1}(\beta,\Omega)\le (1-p)\left[\frac{-\beta}{F^{o}(e_{i})F(e_{i})}\right]^{\frac{p}{p-1}}, 
\end{equation}
for any $i=1,\ldots,N$, where $e_i$ is the $i-th$ vector of the standard basis in $\R^{N}$. Actually, the choice of $\alpha$ gives the best possible constant. Due to \eqref{prodscal}, it holds that $F^{o}(e_{i})F(e_{i})\ge 1$, so the constant in \eqref{corestNweak} is larger than the one in \eqref{corestNine}.
\end{rem}

\section*{Acknowledgements}
This work has been partially supported by the  MIUR-PRIN 2017 grant ``Qualitative and quantitative aspects of nonlinear PDE's'', by GNAMPA of INdAM, by  the FRA Project (Compagnia di San Paolo and Universit\`a degli studi di Napoli Federico II) \verb|000022--ALTRI_CDA_75_2021_FRA_PASSARELLI|.

{\small

\bibliographystyle
{plain}

\begin{thebibliography}{99}

\bibitem{B} Bareket M., \textit{On an isoperimetric inequality for the first eigenvalue of a boundary value problem}. SIAM J. Math. Anal. \textbf{ 8} (1977), no. 2, 280--287.

\bibitem{BV} Bhayo B. A., Vuorinen M., \textit{Inequalities for eigenfunctions of the $p-$Laplacian}.  Probl. Anal. Issues Anal. \textbf{ 2} (2013), no .1, 14--37.

\bibitem{BK} Belloni M., Kawohl B., \textit{A direct uniqueness proof for equations involving the $p$-Laplace operator.} Manuscripta Math. \textbf{ 2} (2002), no. 109, 229--231.

\bibitem{bgm} Buttazzo G., Guarino Lo Bianco S., Marini M., \textit{Sharp estimates for the anisotropic torsional rigidity and the principal frequency}. J. Math. Anal. Appl. \textbf{457} (2018), 1153-1172.

\bibitem{cm} Crasta G., Malusa A., \textit{The distance function from the boundary in a Minkowski space}. Trans. Amer. Math. Soc. \textbf{359} (2007), 5725-5759.

\bibitem{CFV} Cozzi M., Farina A., Valdinoci E., \textit{Gradient Bounds and Rigidity Results for Singular,Degenerate, Anisotropic Partial Differential Equations}. Comm. Math. Phys. \textbf{331} (2014), 189--214.

\bibitem{pota} Della Pietra F., Gavitone N., \textit{Faber-Krahn inequality for anisotropic eigenvalue problems with Robin boundary conditions}. Potential Anal. \textbf{41} (2014), 1147--1166.

\bibitem{DPGGLB} Della Pietra F., Gavitone N., Guarino Lo Bianco S., \textit{On functionals involving the torsional rigidity related to some classes of nonlinear operators}. J. Diff. Eq. \textbf{265} (2018), no. 12, 6424--6442.

\bibitem{DPdBG} Della Pietra F., di Blasio G., Gavitone N., \textit{Sharp estimates on the first Dirichlet eigenvalue of nonlinear elliptic operators via maximum principle.} Adv. Nonlinear Anal. \textbf{9} (2020), no. 1, 278--291.

\bibitem{DPGP} Della Pietra F., Gavitone N., Piscitelli G., \textit{A sharp weighted anisotropic Poincar\'e inequality for convex domains}. C. R. Math. Acad. Sci. Paris \textbf{355} (2017), 748--752.

\bibitem{DPP} Della Pietra F., Piscitelli G., \textit{An optimal bound for nonlinear eigenvalues and torsional rigidity on domains with holes}. Milan J. Math. \textbf{88} (2020), 373--384.

\bibitem{FNT} Ferone V., Nitsch C., Trombetti C., \textit{On a conjectured reverse Faber-Krahn inequality for a Steklov-type Laplacian eigenvalue}. Comm. Pure Appl. Anal. \textbf{14} (2015), no. 1, 63--82.

\bibitem{FK} Freitas P., Krej\v{c}i\v{r}\'ik D., \textit{The first Robin eigenvalue with negative boundary parameter.} Adv. Math. \textbf{ 280} (2015), 322--339.

\bibitem{gs} Giorgi T., Smith R., \textit{Eigenvalue estimates and critical temperature in zero fields for enhanced surface superconductivity}, Z. Angew. Math. Phys. \textbf{58} (2007), 224-245.

\bibitem{G} Grisvard P., \textit{Elliptic problems in nonsmooth domains}. Classics in Applied Mathematics \textbf{69}, Society for Industrial and Applied Mathematics (SIAM), Philadelphia, PA (2011), xx+410 pp.

\bibitem{H1} Hersch J., \textit{Sur la fr\'equence fondamentale d'une membrane vibrante: \'evaluations par d\'efaut et principe de maximum}. Z. Angew. Math. Phys. \textbf{11} (1960), 387--413.

\bibitem{K} Kova\v{r}\'ik H., \textit{On the lowest eigenvalue of Laplace operators with mixed boundary conditions}. J. Geom. Anal. \textbf{24} (2014), 1509--1525.

\bibitem{KP} Kova\v{r}\'ik H.,  Pankrashkin K., \textit{On the $p$-Laplacian with Robin boundary conditions and boundary trace theorems}. Calc. Var. Partial Differential Equations \textbf{ 56} (2017), no. 2, 49, 1--29.

\bibitem{LE} Lang J., Edmunds D., \textit{Eigenvalues, Embeddings and Generalised Trigonometric Functions}. Lecture Notes in Mathematics, Springer, Heidelberg (2011), xii+220 pp.

\bibitem{lea}
L\^e A.,
\textit{Eigenvalue problems for the $p$-Laplacian}.
{Nonlinear Analysis: T.M.A.}, 64(5):1057--1099, 2006.

\bibitem{LLL} Lewis R. T., Li J., Li Y., \textit{A geometric characterization of a sharp Hardy
inequality}. J. Functional Analysis \textbf{262} (2012), 3159--3185.

\bibitem{LW} Li X., Wang K., \textit{First Robin eigenvalue of the $p$-Laplacian on Riemannian manifolds}. Math. Z. \textbf{ 298} (2021), no. 3, 1033--1047.

%\bibitem{lieber}
%Lieberman G.M., {Boundary regularity for solutions of degenerate elliptic equations}.
%\textit{Nonlinear Anal. }, 12:1203--1219, 1988.

\bibitem{leoni}  Leoni G., \textit{A first course in Sobolev spaces, 2nd ed., Graduate Studies in Mathematics}, vol. 181, American Mathematical Society, Providence, RI (2017).

\bibitem{L} Lindqvist P., \textit{Some remarkable sine and cosine functions}. Ric. Mat. \textbf{44} (1995), 269--290.

\bibitem{PPT} Paoli G., Piscitelli G., Trani L., \textit{Sharp estimates for the first $p$-Laplacian eigenvalue and for the $p$-torsional rigidity on convex sets with holes}. ESAIM Control Optim. Calc. Var. \textbf{26} (2020), no. 111, 1--15.

\bibitem{PT} Paoli G., Trani L., \textit{Two estimates for the first Robin eigenvalue of the Finsler Laplacian with negative boundary parameter.} J. Optim. Theory Appl. \textbf{ 181} (2019), n. 3, 743--757.

\bibitem{Pa68} Payne L. E., \textit{Bounds for the maximum stress in the Saint Venant torsion
problem}. Special issue presented to Professor Bibhutibhusan Sen on the occasion of his seventieth birthday, Part I. Indian J. Mech. Math. Special Issue Special Issue (1968/69), part I, 51--59.

\bibitem{Pr} Protter M.H., \textit{Lower bounds for the first eigenvalue of elliptic equations}. Ann. of Math. \textbf{71} (1960), 423--444.


\bibitem{pee} Peetre J., \textit{The best constant in some inequalities involving $L_{q}$ norms}, Ric. Mat. \textbf{21} (1972), 176--183.


\bibitem{S} Savo A., \textit{Optimal eigenvalue estimates for the Robin Laplacian on Riemannian manifolds}. J Diff. Eq. \textbf{268} (2020), no. 5, 2280--2308.


\bibitem{Sp81} Sperb R., \textit{Maximum principles and applications}. Mathematics in Science and Engineering \textbf{ 157}, Academic Press Inc., New York-London (1981), ix+224 pp.

\bibitem{Sp92} Sperb R., \textit{An isoperimetric inequality for the first eigenvalue of the Laplacian under Robin boundary conditions}. In: W. Walter (ed.) General Inequalities 6, International Series of Numerical Mathematics, vol. 103, pp. 361–367. Birkh\"auser, Basel (1992).

\bibitem{to} Tolksdorf P., \textit{Regularity for a more general class of quasilinear elliptic equations}. J. Diff. Eq. \textbf{ 51}  (1984), no. 1, 126--150.

\bibitem{wx} G. Wang and C. Xia,
\emph{A characterization of the Wulff shape by an overdetermined anisotropic PDE}. {Arch. Ration. Mech. Anal.}, 199(1): 99--115, 2011.

\end{thebibliography}

}
\end{document}